\newcommandx{\unsure}[2][1=]{\todo[linecolor=red,backgroundcolor=red!25,bordercolor=red,#1]{#2}}
\newcommandx{\change}[2][1=]{\todo[linecolor=blue,backgroundcolor=orange!25,bordercolor=blue,#1]{#2}}
\newcommandx{\info}[2][1=]{\todo[linecolor=OliveGreen,backgroundcolor=OliveGreen!25,bordercolor=OliveGreen,#1]{#2}}
\newcommand\redout{\bgroup\markoverwith{\textcolor{red}{\rule[0.5ex]{2pt}{0.8pt}}}\ULon}
\newtheorem{theorem}{Theorem}
\newtheorem{lemma}[theorem]{Lemma}
\newtheorem{claim}[theorem]{Claim}
\tikzset{font= }
\newcommand\nc\newcommand
\nc\bfa{{\boldsymbol a}}\nc\bfA{{\boldsymbol A}}\nc\cA{{\mathscr A}}
\nc\bfb{{\boldsymbol b}}\nc\bfB{{\boldsymbol B}}\nc\cB{{\mathscr B}}
\nc\bfc{{\boldsymbol c}}\nc\bfC{{\boldsymbol C}}\nc\cC{{\mathscr C}}
\nc\bfd{{\boldsymbol d}}\nc\bfD{{\boldsymbol D}}\nc\cD{{\mathscr D}}
\nc\bfe{{\boldsymbol e}}\nc\bfE{{\boldsymbol E}}\nc\cE{{\mathscr E}}
\nc\bff{{\boldsymbol f}}\nc\bfF{{\boldsymbol F}}\nc\cF{{\mathscr F}}
\nc\bfg{{\boldsymbol g}}\nc\bfG{{\boldsymbol G}}\nc\cG{{\mathscr G}}
\nc\bfh{{\boldsymbol h}}\nc\bfH{{\boldsymbol H}}\nc\cH{{\mathscr H}}\nc\fH{{\mathfrak H}}
\nc\bfi{{\boldsymbol i}}\nc\bfI{{\boldsymbol I}}\nc\cI{{\mathcal I}}
\nc\bfj{{\boldsymbol j}}\nc\bfJ{{\boldsymbol J}}\nc\cJ{{\mathscr J}}
\nc\bfk{{\boldsymbol k}}\nc\bfK{{\boldsymbol K}}\nc\cK{{\mathscr K}}
\nc\bfl{{\boldsymbol l}}\nc\bfL{{\boldsymbol L}}\nc\cL{{\mathscr L}}
\nc\bfm{{\boldsymbol m}}\nc\bfM{{\boldsymbol M}}\nc\cM{{\mathscr M}}
\nc\bfn{{\boldsymbol n}}\nc\bfN{{\boldsymbol N}}\nc\sN{{\mathscr N}}
\nc\bfo{{\boldsymbol o}}\nc\bfO{{\boldsymbol O}}\nc\cO{{\mathscr O}}
\nc\bfp{{\boldsymbol p}}\nc\bfP{{\boldsymbol P}}\nc\cP{{\mathscr P}}
\nc\bfq{{\boldsymbol q}}\nc\bfQ{{\boldsymbol Q}}\nc\cQ{{\mathscr Q}}
\nc\bfr{{\boldsymbol r}}\nc\bfR{{\boldsymbol R}}\nc\cR{{\mathscr R}}
\nc\bfs{{\boldsymbol s}}\nc\bfS{{\boldsymbol S}}\nc\cS{{\mathscr S}}
\nc\bft{{\boldsymbol t}}\nc\bfT{{\boldsymbol T}}\nc\cT{{\mathscr T}}
\nc\bfu{{\boldsymbol u}}\nc\bfU{{\boldsymbol U}}\nc\cU{{\mathscr U}}
\nc\bfv{{\boldsymbol v}}\nc\bfV{{\boldsymbol V}}\nc\cV{{\mathscr V}}
\nc\bfw{{\boldsymbol w}}\nc\bfW{{\boldsymbol W}}\nc\cW{{\mathscr W}}
\nc\bfx{{\boldsymbol x}}\nc\bfX{{\boldsymbol X}}\nc\cX{{\mathscr X}}
\nc\bfy{{\boldsymbol y}}\nc\bfY{{\boldsymbol Y}}\nc\cY{{\mathscr Y}}
\nc\bfz{{\boldsymbol z}}\nc\bfZ{{\boldsymbol Z}}\nc\cZ{{\mathscr Z}}
\nc\pp{\mathbb{P}}
\nc\ee{\mathbb{E} }
\renewcommand{\leq}{\leqslant}
\renewcommand{\geq}{\geqslant}
\nc{\Cay}{{\sf Cay}}
\nc{\ff}{{\mathbb F}}
\newcommand\remove[1]{}
\title[]{Maximum Percolation Time on the q-ary hypercube}
\author[]{Fengxing Zhu}\thanks{Institute for Systems Research and Department of ECE, University of Maryland, College Park, MD 20742, USA, fengxing@terpmail.umd.edu. Supported in part by NSF grant CCF 2330909.}
\begin{document}
\begin{abstract}
We consider the $2$-neighbor bootstrap percolation process on the $n$-dimensional $q$-ary hypercube with vertex set $V=\{0,1,\dots,q-1\}^n$ and edges connecting the pairs at Hamming distance $1$. We extend the main theorem of Przykucki (2012) about the maximum percolation time with threshold $r=2$ on the binary hypercube to the $q$-ary case, finding the exact value of this time for all $q \geq 3$. 
\end{abstract}
\maketitle

\section{Introduction}
The process of $r$-neighbor bootstrap percolation on an undirected graph $G(V,E)$ was introduced by Chalupa, Leith, and Reich \cite{Chalupa_1979}. In this process, each vertex is either infected or healthy, and once a vertex becomes infected, it remains infected forever. Let $A_0$ denote the set of initially infected vertices and let $A_i$ represent the set of infected vertices up to step $i$. The bootstrap percolation process evolves in discrete steps as follows: for $i>0$,
$$A_i = A_{i-1} \cup \{v \in V:|N_v \cap A_{i-1}| \geq r \}.$$
The process terminates, when for some $i$, $A_i=A_{i-1}$ or $A_i=V$ (percolation occurs).
Here $N_v=N_v(G)=\{u\in V: uv\in E\}$ is the neighborhood of $v$ in $G$. The parameter $r$ is called the {\em infection threshold}.

Bootstrap percolation can be studied in either a deterministic or random setting, depending on how the initial set $A_0$ is chosen. In the random setting, $A_0$ is generated by independently infecting each vertex $v\in V$ with probability $p$.
The central question in the problem of random bootstrap percolation is to determine the critical probability, defined as
   $$
   p_c(G,r)=\sup\{p \in (0,1): \mathbb{P}_p(A_0 \; \text{percolates on} \: G) \leq \frac{1}{2} \}.
   $$

 Much research has been dedicated to examining the critical probability on $d$-dimensional grid graphs $[n]^d$. Notable works include \cite{Aizenman_1988,Cirillo,CERF200269,Holroyd2002SharpMT},
with the work in \cite{Balogh} establishing a sharp estimate for $p_c([n]^d,r)$, $2 \leq r \leq d$.

In addition to grid graphs the critical probability on the $n$-dimensional binary hypercube $Q_n$ has been investigated for various infection thresholds. In \cite{balogh_bollobas_2006}, the authors obtained a tight estimate for the critical probability $p_c(Q_n,2)$ up to a constant factor, with a sharper estimate later provided in \cite{balogh_bollobas_morris_2010}. In addition to studying the critical probability when the infection threshold is a constant, \cite{balogh_bollobas_morris_2009} derived a sharp estimate for $p_c(Q_n,\frac{n}{2})$ and the exact second-order term. More recently, the main result from \cite{balogh_bollobas_2006} on the binary Hamming cube was extended to the $q$-ary Hamming cube in \cite{kang2024bootstarp}.

In the deterministic setting, one of the main problems is determining the size of the smallest "contagious set," which is the initially infected set that results in full infection under the $r$-process. Another related question is identifying the maximum number of steps required for the process to reach percolation, given that $A_0$ is contagious. Let $m(G,r)$ denote the size of a minimum contagious set and let $M(G,r)$ denote the maximum percolation time on a graph $G$ with the infection threshold $r$. For the grid graph, \cite{Benevides} found the exact first order term of $M([n]^2,2)$ and \cite{Fabricio2} determined the exact maximum percolation time on $[n]^2$ with an infection threshold of $2$ for all contagious sets of minimum size. Additionally, the authors in \cite{dukes2023extremalboundsthreeneighbourbootstrap}
found the exact value of $m([a_1] \times [a_2] \times [a_3],3)$ for all but small values of 
the dimensions $a_i$ and obtained a complete characterization of $m([n]^2,3)$ by extending the work in \cite{Fabricio}. For the hypercube, \cite{MORRISON201861} obtained a tight estimate of $m(Q_n,r)$ by using the linear algebraic technique developed in \cite{Balough2012}. In addition, \cite{Michal} derived the exact value of $M(Q_n,2)$ and \cite{Ivailo} gave an estimate for $M(Q_n,r \geq 3)$.

In this paper we address a combinatorial question in bootstrap percolation, namely, the maximum percolation time over all contagious sets on $q$-ary hypercubes with the infection threshold $r=2$. More specifically we extend the main theorem in \cite{Michal} about the maximum percolation time with threshold $r=2$ on the binary hypercubes to the $q$-ary case.

Let us define the $n$-dimensional $q$-ary hypercube, denoted by $Q_{n,q}$ , as $V(Q_{n,q})=\{0,1,\dots,q-1\}^n$ and $E(Q_{n,q})=\{xy: x,y \in \{0,1,\dots,q-1\}^n, |\{i: x_i \neq y_i\}|=1\}$. 

In order to simplify the notation, instead of writing $M(Q_{n,q},2)$, we write $M_q(n)$ to denote the maximum percolation time on the $n$-dimensional $q$-ary hypercube under $2$-neighbor bootstrap percolation. 

\begin{theorem} \label{Theorem1}
For $q \geq 3$, $M_q(0)=0$, $M_q(1)=1$ and $M_q(2)=3$. For larger $n$, 
\begin{align*}
M_{3}(n)
&=
\begin{cases}
  \frac{n^2}{3}+\frac{2n}{3} & \text{if $n=3,6,9,\cdots,$}\\ 
  \frac{n^2}{3}+\frac{2n}{3}& \text{if $n=4,7,10, \cdots,$} \\
   \frac{n^2}{3}+\frac{2n}{3}+\frac{1}{3}& \text{if $n=5,8,11,\cdots.$}\\ 
\end{cases}
\end{align*}

For $q \geq 4$,
\begin{align*}
M_{q}(n)
&=
\begin{cases}
  \frac{n^2}{3}+n & \text{if $n=3,6,9,\cdots,$}\\
  \frac{n^2}{3}+n-\frac{1}{3}& \text{if $n=4,7,10, \cdots,$} \\
   \frac{n^2}{3}+n-\frac{1}{3}& \text{if $n=5,8,11,\cdots.$}\\
\end{cases}
\end{align*}

\end{theorem}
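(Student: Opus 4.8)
The plan is to prove matching lower and upper bounds. Throughout I will use the key structural feature of $Q_{n,q}$: fixing all but one coordinate yields a clique $K_q$ (a ``line''), so \emph{any line carrying two infected vertices is completely infected after one further step}. Consequently, if we split $Q_{n,q}$ along a coordinate into the $q$ layers $L_0,\dots,L_{q-1}$ (each a copy of $Q_{n-1,q}$), then every ``vertical'' line collapses almost instantly once two of its $q$ points are infected, and the nontrivial dynamics takes place inside the layers. I would first dispose of the base cases $n\le 2$ by direct inspection (in $Q_{1,q}=K_q$ a minimal contagious set is any two vertices, giving $M_q(1)=1$; the $q\times q$ case $n=2$ is a short finite check giving $M_q(2)=3$). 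I would also record at the outset the two phenomena the formulas encode: the value is independent of $q$ once $q\ge 4$ (the time \emph{saturates} at $q=4$), while $q=3$ loses a small additive amount, and the residue of $n$ modulo $3$ governs a period-$3$ correction. A convenient reformulation of the target is the block recursion $M_q(n)=M_q(n-3)+2n$ for $q\ge 4$ and $M_q(n)=M_q(n-3)+2n-1$ for $q=3$ (valid in every residue class, as one checks against the closed forms), which is what I would actually establish, together with the three initial values $M_q(3),M_q(4),M_q(5)$ anchoring the residue classes.

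For the lower bound I would give an explicit contagious set built recursively in blocks of three coordinates. Starting from a time-optimal slow configuration on a sub-cube, I would seed the three new coordinate directions so as to create a ``thin wave'' that sweeps across the added dimensions, each block contributing the quadratic increment $2n$ (resp.\ $2n-1$) to the delay. The governing design constraint is exactly the line-completion property above: the seeds must be placed so that \emph{no line ever holds two infected vertices prematurely}, since a premature pair would collapse an entire line in one step and destroy the delay. This is where $q\ge 4$ helps: having at least four available values per coordinate gives enough ``room'' to keep the intermediate infected points on distinct lines and to stage the wave through one extra phase, which is precisely the $+1$-per-block advantage over $q=3$; conversely, using only the four smallest values in each direction shows the construction is already optimal for every $q\ge 4$, explaining the saturation. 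I would then verify by induction that the configuration is contagious and that its last vertex is infected exactly at the claimed time, treating the residues $n\equiv 0,1,2 \pmod 3$ as the three stages of a block.

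For the upper bound I would prove the matching recursion by splitting $Q_{n,q}$ along one coordinate into layers $L_0,\dots,L_{q-1}$ and analyzing a vertex $v=(x,c)$ infected at the final time $T=M_q(n)$. The analysis would branch on how $v$ receives its second activating neighbor: horizontally inside $L_c$, in which case the induced process on $L_c$ (dominated by $2$-neighbor percolation on $Q_{n-1,q}$, possibly accelerated by vertical help) was still active at time $T-1$; or vertically, in which case two of the $q$ copies of $x$ were infected in earlier layers and fast line-completion forces strong constraints on those layers' histories. In either branch the goal is to bound how much later the last layer can finish than the first, allowing only the limited acceleration that vertical lines provide, to obtain $T\le M_q(n-1)+(\text{linear increment})$ and, iterating three times, $T\le M_q(n-3)+2n$ (resp.\ $2n-1$). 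The distinction between $q=3$ and $q\ge 4$ re-enters here through the accounting of vertical help: with only three layers this help is more constrained, tightening the bound by the same additive amount seen in the construction.

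The main obstacle I anticipate is the upper bound, and specifically getting the additive constants and the $\bmod 3$ corrections \emph{exactly} rather than up to $O(1)$. The coarse recursion is easy to believe; the difficulty is that the cross-layer help through the vertical cliques both accelerates percolation (so it cannot be ignored) and must be bounded with no slack in order to separate $q=3$ from $q\ge 4$ and to pin down which residue class gains the extra $\tfrac13$. I would expect to need a refined invariant---for instance tracking, per layer, the first time each line acquires its second infection, together with a potential that is monotone under the induced dynamics---rather than the bare percolation-time recursion, and to treat the three residues with slightly different bookkeeping. A secondary obstacle is purely verificational: confirming that the lower-bound construction never triggers an unintended line-completion, which requires comparing the relative infection times of many lines and is the natural place for an off-by-one error to hide.
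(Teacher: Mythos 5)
Your target recursion ($M_q(n)=M_q(n-3)+2n$ for $q\ge4$, $M_q(n)=M_q(n-3)+2n-1$ for $q=3$) is exactly the one the paper proves, and your lower-bound plan is essentially the paper's construction in spirit: a slow configuration on a codimension-3 subcube plus a few seeds spanning the three new directions, with the extra $+1$ per block for $q\ge4$ coming from the additional alphabet room. What you gloss over is the actual content of that half of the proof: verifying that the seeds never interact prematurely requires exact \emph{non-infection} statements (in the paper, Lemmas \ref{lemma:ST4}--\ref{lemma:ST6} and the ``Condition($k$)'' claim), not just the observation that lines with two infected vertices collapse.

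The genuine gap is your upper bound. Splitting $Q_{n,q}$ into the $q$ layers of one coordinate and bounding $T\le M_q(n-1)+(\text{linear increment})$ does not work as stated, for a concrete reason: the induced dynamics inside a layer $L_c$ is \emph{not} a $2$-neighbour bootstrap process on $Q_{n-1,q}$ with a fixed initial set. Vertical neighbours inject infection into $L_c$ at arbitrary, configuration-dependent times, so vertices of $L_c$ get infected with one or even zero infected neighbours inside $L_c$; moreover $A_0\cap L_c$ typically does not percolate in $L_c$ at all. Hence $M_q(n-1)$ bounds nothing about the layer dynamics, and no monotonicity argument reduces it to an autonomous $(n-1)$-dimensional process. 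Even granting some one-step inequality, the increments you would need are residue-dependent (for $q\ge4$ the true differences $M_q(n)-M_q(n-1)$ run $3,4,5,5,6,7,7,\dots$ from $n=4$ on), so ``iterate three times'' hides three distinct sharp inequalities, each about as hard as the theorem; your proposed ``refined invariant'' is exactly the unresolved point. The paper's upper bound uses entirely different machinery, which is the missing idea here: every percolating set yields a nested sequence of internally spanned subcubes, each spanned by its predecessor together with one more internally spanned cube of no larger dimension (Lemma \ref{lemma:nested sequence}), and the exact merge time of two subcubes at Hamming distance $d\le2$ with $p$ common free coordinates is $m-p-1+d$, where $m$ is the dimension of their span (Lemma \ref{lemma:summary}, distilled from the six technical lemmas). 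A case analysis on the dimensions of the last two cubes in that sequence then gives $T\le\max\{M_q(n-2)+n+1,\,M_q(n-3)+2n\}$ (with $2n-1$ when $q=3$), which is where the exact constants, the $q=3$ versus $q\ge4$ split, and the mod-3 corrections actually come from. Without this subcube structure theory (or a worked-out substitute), your upper bound remains a plan rather than a proof.
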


\section{Preliminary Results}
We adopt the notation introduced in \cite{balogh_bollobas_2006} and \cite{Michal}. Specifically, we denote by $Q_l$ any of the $\binom{n}{l}q^{n-l}$ subcubes of dimension $l$ in $Q_{n,q}$. For $x=(x_i)_{i=1}^n \in \{0,1,\dots,q-1,*\}^n$, let $Q^{x}$ be the subcube $\{z=(z_i)_{i=1}^n \in \{0,1,\dots,q-1\}^n : z_i=x_i \; \text{if} \; x_i \neq *\}$. 

We will write $x=x_1x_2\cdots x_n$ and $x=(x_1,x_2.\dots,x_n)$ interchangeably. For convenience and clarity we will write $x=\underbrace{a\cdots a}_{k}\underbrace{b\cdots b}_{l}$ as $[a]^k[b]^l$.

Define the distance between two coordinates
\begin{align*}
d(x_i,y_i)
&=
\begin{cases}
  1& \text{if $x_i \neq y_i$, $x_i \neq *$ and $y_i \neq *$}, \\
   0 & \text{if either $x_i=*$ or $y_i=*$},\\
  0 & \text{if $x_i=y_i$}.\\ 
\end{cases}
\end{align*}

Using the above definition we can define the distance between two subcubes $Q^x$ and $Q^y$ in $Q_n$ as 
$$d(Q^x,Q^y)=\sum_{i=1}^n d(x_i,y_i),$$
where $x$ and $y$ represent subcubes $Q^x$ and $Q^y$. 

Let $\langle A_0 \rangle= \cup_{i=0}^{\infty} A_t$ be the set of all eventually infected vertices under the 2-neighbor process when $A_0$ is the set of initially infected vertices. 

A set $S \subset V(Q_{q,n})$ is closed if every vertex $x \in V(Q_{n,q}) \backslash S$ has at most one neighbor in $S$. Note that every subcube of a hypercube is closed. 

Let us prove some preliminary results by following \cite{balogh_bollobas_2006}. Before stating the first lemma we need to give a definition.

For $x$ and $y \in \{0,1,\dots,q-1,*\}^n$, define 
$$x \vee y =z =(z_i)_{i=1}^n,$$
where 
\begin{align*}
(z_i)=
\begin{cases}
  x_i& \text{if $x_i =y_i$,} \\
   * & \text{otherwise.}\\
\end{cases}
\end{align*}
For $x \in \{0,1,\dots,q-1,*\}^n$ we define the dimension of $x$ as $\text{dim}(x)=|\{i:x_i=*\}|$.

If the dimension of a subcube $Q^x$ is $l$ we will sometimes write it as $Q_l^x$. Similarly if we only know that the cube has dimension at most $l$ we may write it as $Q_{\leq l}^x$.

The following three lemmas were proven for the binary hypercube in \cite{balogh_bollobas_2006}. For the non-binary case the proofs follow similarly and we give the proofs for completeness.
\begin{lemma} \label{lemma:closed}
    If the hypercube $Q_{n,q}$ contains a subcube $Q_l$ with $S \subset Q_l$, then $\langle S \rangle\subset Q_l$.
\end{lemma}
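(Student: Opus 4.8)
The plan is to exploit the remark that every subcube of $Q_{n,q}$ is closed, together with the elementary principle that the $2$-neighbor process can never escape a closed set. Running the process from $A_0 = S \subset Q_l$, I would prove by induction on $t \ge 0$ that $A_t \subset Q_l$, where $A_t$ is the set of vertices infected by time $t$. Taking the union over all $t$ then yields $\langle S \rangle = \bigcup_{t \ge 0} A_t \subset Q_l$, which is the assertion of the lemma.

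For the induction, the base case $t = 0$ is just the hypothesis $A_0 = S \subset Q_l$. In the inductive step I would assume $A_{t-1} \subset Q_l$ and consider an arbitrary newly infected vertex $v \in A_t \setminus A_{t-1}$, which by definition of the process satisfies $|N_v \cap A_{t-1}| \ge 2$. If $v$ were outside $Q_l$, then closedness of $Q_l$ would force $v$ to have at most one neighbor in $Q_l$, hence at most one neighbor in the subset $A_{t-1} \subset Q_l$, contradicting $|N_v \cap A_{t-1}| \ge 2$. Thus every such $v$ lies in $Q_l$, giving $A_t \subset Q_l$ and closing the induction.

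The only step needing a short justification is the closedness of a subcube in the $q$-ary setting; since this carries the entire content of the argument, it is where I would be most careful. Writing $Q_l = Q^x$ with fixed-coordinate set $F = \{i : x_i \ne *\}$, a vertex $v \notin Q^x$ disagrees with $x$ on at least one index of $F$. If it disagrees on two or more such indices, no single-coordinate flip can move $v$ into $Q^x$, so $v$ has no neighbor in $Q^x$; if it disagrees on exactly one index $j \in F$, the unique neighbor of $v$ inside $Q^x$ is obtained by resetting the $j$-th coordinate to $x_j$ (note that in the $q$-ary cube a neighbor flips one coordinate to any of the other $q-1$ values, but only the value $x_j$ lands in $Q^x$). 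Either way $v$ has at most one neighbor in $Q^x$. I do not anticipate a genuine obstacle: the result is a routine induction whose substance is this geometric closedness fact, exactly as in the binary case of \cite{balogh_bollobas_2006}.
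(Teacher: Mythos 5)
Your proof is correct and rests on the same key fact as the paper's: a subcube of $Q_{n,q}$ is closed under the $2$-neighbor process, so the infection starting inside $Q_l$ can never leave it. The paper's proof is a one-liner invoking the characterization of $\langle S \rangle$ as the intersection of all closed sets containing $S$ (with closedness of subcubes stated earlier as a remark), whereas you unfold exactly that content into an explicit induction on the time steps and also verify the $q$-ary closedness claim that the paper leaves unproved --- the same argument in substance, just more self-contained.
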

\begin{proof}
Let $S \subset Q_l \subset Q_n$, where $Q_l$ is a subcube of $Q_{n,q}$. Note that $\langle S \rangle$ is the intersection of all closed sets containing $S$. 

Since the subcube $Q_l$ is a closed set containing $S$, it contains the intersection of all closed sets containing $S$ and thus it contains $\langle S \rangle$. 
\end{proof}

\begin{lemma} \label{lemma:S}
Let $x$ and $y $ be vectors $\in \{0,1,\cdots,q-1,*\}^n$ with $z=x \vee y$ and $d(x,y) \leq 2$. If $S \subset Q_{l}^x \cup Q_k^y$, then $\langle S \rangle \subset Q_{\leq l+k+2}^z$. 
\end{lemma}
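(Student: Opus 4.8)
The plan is to bypass the percolation dynamics entirely and reduce the claim to Lemma~\ref{lemma:closed}. Since every subcube is closed, that lemma guarantees that as soon as we exhibit a single subcube $Q^z$ of dimension at most $l+k+2$ with $S \subset Q^z$, the closure $\langle S \rangle$ is automatically trapped inside it. Because $S \subset Q_l^x \cup Q_k^y$ by hypothesis, it therefore suffices to prove two purely combinatorial facts about the vector $z = x \vee y$: (i) $\dim(z) \le l+k+2$, and (ii) $Q^x \subset Q^z$ together with $Q^y \subset Q^z$.

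First I would set up the bookkeeping of starred coordinates. Write $A = \{i : x_i = *\}$ and $B = \{i : y_i = *\}$, so that $|A| = l$ and $|B| = k$, and let $D = \{i : x_i \neq *,\ y_i \neq *,\ x_i \neq y_i\}$ be the set of coordinates contributing to the distance, so that $|D| = d(x,y) \le 2$. Straight from the definition of $x \vee y$, a coordinate satisfies $z_i = *$ exactly when $x_i \neq y_i$ or at least one of $x_i, y_i$ is $*$; unwinding the cases shows that the starred coordinates of $z$ are precisely $A \cup B \cup D$. Since the coordinates of $D$ are non-star in both $x$ and $y$, the set $D$ is disjoint from $A \cup B$, so the cardinalities add without overcounting: $\dim(z) = |A \cup B \cup D| = |A \cup B| + |D| \le l + k + d(x,y) \le l+k+2$, which is fact (i).

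Next I would verify the containments. Fix $w \in Q^x$; to show $w \in Q^z$ I must check that $w_i = z_i$ at every coordinate with $z_i \neq *$. But $z_i \neq *$ forces $x_i = y_i \neq *$, and in particular $x_i \neq *$, so membership $w \in Q^x$ gives $w_i = x_i = z_i$. Hence $Q^x \subset Q^z$, and the symmetric argument gives $Q^y \subset Q^z$; together these yield $S \subset Q_l^x \cup Q_k^y \subset Q^z$, which is fact (ii). Combining (i) and (ii), $Q^z$ is a subcube of dimension at most $l+k+2$ containing $S$, so Lemma~\ref{lemma:closed} delivers $\langle S \rangle \subset Q^z = Q_{\le l+k+2}^z$.

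There is no genuine obstacle here: the whole argument is a reduction to Lemma~\ref{lemma:closed} plus elementary set counting. The only place the hypothesis enters is the bound $d(x,y) \le 2$, which is used solely through $|D| \le 2$ in the dimension count and is exactly what keeps the trapping subcube from exceeding dimension $l+k+2$. The one step demanding a little care is the case analysis identifying the starred coordinates of $z$ as $A \cup B \cup D$ and confirming that $D$ meets neither $A$ nor $B$, since that disjointness is what makes the dimensions add rather than merely subadd.
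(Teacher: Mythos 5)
Your proof is correct and follows essentially the same route as the paper's: observe that $S \subset Q_l^x \cup Q_k^y \subset Q^z$, invoke closedness of the subcube $Q^z$ (via Lemma~\ref{lemma:closed}) to trap $\langle S \rangle$, and bound $\dim(z)$ by $l+k+d(x,y) \le l+k+2$. The paper states these steps tersely, while you carefully verify the containments $Q^x, Q^y \subset Q^z$ and the disjointness that makes the dimension count exact; this is the same argument with the bookkeeping made explicit.
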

\begin{proof}
Since $S \subset Q_{l}^x \cup Q_k^y \subset Q^{z}$ and the subcube $Q^z$ is closed, we have $\langle S \rangle \subset Q^z$. Since the number of $*$ in the coordinates of $z$ is at most $l+k+d(x,y)$, the dimension of the subcube $Q^z$ is $\leq l+k+2$.
\end{proof}

\begin{lemma} \label{lemma:Q^x Q^y}
For $x$ and $y \in \{0,1,\dots,q-1,*\}^n$ with $d(x,y) \leq 2$, we have 
$$\langle Q^x \cup Q^y \rangle = Q^{x \vee y}.$$
\end{lemma}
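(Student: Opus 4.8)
The inclusion $\langle Q^x\cup Q^y\rangle\subseteq Q^{x\vee y}$ is immediate: the subcube $Q^{x\vee y}$ is closed and contains $Q^x\cup Q^y$, hence it contains their closure (this is exactly the content of Lemma~\ref{lemma:S}). All of the work lies in the reverse inclusion $Q^{x\vee y}\subseteq\langle Q^x\cup Q^y\rangle$, which I would prove by showing that the whole subcube $Q^{x\vee y}$ eventually becomes infected. Writing $z=x\vee y$, I would first classify the coordinates into five types: those with $x_i=y_i\neq *$ (fixed and equal, so $z_i$ stays fixed — these play the role of a constant background and may be discarded, so I assume $z=(*,\dots,*)$); the set $B$ where $x_i=y_i=*$; the sets $E_x,E_y$ where exactly one of $x_i,y_i$ equals $*$; and the set $D$ where $x_i,y_i$ are distinct non-$*$ values, with $|D|=d(x,y)\le 2$.

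The plan is to induct on $|B|+|E_x|+|E_y|$. For a coordinate $i\in B$ there is a clean decoupling: since $Q^x$ and $Q^y$ are both free in coordinate $i$, for each value $c$ the slice $\{z_i=c\}$ meets $Q^x\cup Q^y$ in $Q^{x^{(c)}}\cup Q^{y^{(c)}}$, where $x^{(c)},y^{(c)}$ fix coordinate $i$ to $c$; these still satisfy $d(x^{(c)},y^{(c)})\le 2$. The induction hypothesis fills each slice using only in-slice infections, and since this holds for every $c$ the entire cube fills, so I may assume $B=\emptyset$. The mismatch coordinates $E_x,E_y$ are the crux. Suppose $i\in E_y$, so $y$ is free and $x$ is fixed to $x_i$ in coordinate $i$. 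In the single slice $\{z_i=x_i\}$ the sets $Q^x$ and $Q^y\cap\{z_i=x_i\}$ now agree in coordinate $i$ and still have distance at most $2$, so by induction that slice fills completely to a full codimension-one cube $R$. Now every vertex of any other slice $\{z_i=c\}$ has its coordinate-$i$ neighbour in the already-infected $R$; hence inside such a slice a vertex needs only one further infected neighbour, i.e. the process reduces to ordinary $1$-neighbour spreading seeded by the nonempty subcube $Q^y\cap\{z_i=c\}$, which percolates across the connected slice. Thus every slice fills and coordinate $i$ may be deleted; the case $i\in E_x$ is symmetric.

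After these reductions $x$ and $y$ are single vertices differing in exactly $|D|\le 2$ coordinates, and it remains to treat the base cases. If $|D|=1$ the two vertices lie on a common line, and every other vertex of that line sees both of them, so the line fills (this is the $q\ge 3$ version of merging two parallel subcubes across a single coordinate). If $|D|=2$ the two vertices are opposite corners of a $q\times q$ grid: first the other two corners of the rectangle they span become infected (each sees one original vertex in each of the two directions), then every full row and column through these four corners fills, and finally every remaining vertex sees two infected vertices in its own row, so the grid fills. I expect the main obstacle to be the $E_y$ (and $E_x$) reduction in the middle step: one must fill a single slice by induction and then exploit that the completed slice lowers the effective threshold in the neighbouring slices to one, converting $2$-neighbour percolation into $1$-neighbour spreading. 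Getting the order of infection correct and keeping the bookkeeping of coordinate types consistent is the delicate part, whereas the $B$-decoupling and the two grid base cases are routine.
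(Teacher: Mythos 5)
Your proof is correct, and it reaches the conclusion by a genuinely different organization than the paper's, even though both rest on the same engine: grow the infected region one free coordinate at a time, using the fact that once one codimension-one slice of the target cube is full, every vertex of a parallel slice already has one infected neighbour there, so a nonempty seed spreads through that slice. The paper first reduces to an explicit normal form $x=[\ast]^k[0]^{n-k}$, $y=[\ast]^j[0]^{k-j}[\ast]^{l-j}[0]^{n-k-l+j-2}ab$ (so it only writes out the case $d(x,y)=2$), proves an auxiliary claim that any vertex with two neighbours in $A$ lies in $\langle A\rangle$, and then inducts along a nested family of cubes $Q^{z_i}$ interpolating from the ``common-free plus last-two-coordinates'' cube $Q^{z_0}$ up to $Q^{x\vee y}$, locating the second infected neighbour of each new vertex by a further sub-induction on $r$. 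You instead type the coordinates ($B$, $E_x$, $E_y$, $D$), induct on $|B|+|E_x|+|E_y|$, dispose of $B$ by slicewise decoupling (which is really monotonicity of closure applied to the pairs $Q^{x^{(c)}}\cup Q^{y^{(c)}}$), handle each $E$-coordinate by the fill-one-slice-then-spread step, and finish with the minimal base cases: for $|D|=1$ a line, which is a clique in $Q_{n,q}$, and for $|D|=2$ the $q\times q$ rook's grid, both of which fill as you describe. Your route buys uniform treatment of $d(x,y)\in\{0,1,2\}$ with no normal form, and it replaces the paper's nested sub-induction bookkeeping by the transparent observation that a completed adjacent slice lowers the effective threshold to $1$, after which connectivity of the Hamming slice finishes the job; the paper's route buys very concrete neighbour-membership verifications, since everything is written in coordinates. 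Two points you should make explicit in a write-up: the closures computed inside a slice (or inside the grid/line of the base cases) are contained in the closure in $Q_{n,q}$ by monotonicity, which is what licenses your ``in-slice infections'' phrasing; and in the $E_y$ step the two infected neighbours you produce for a vertex $u$ --- its in-slice neighbour and its coordinate-$i$ neighbour in $R$ --- are distinct because they differ in coordinate $i$, so the threshold-$2$ rule genuinely fires.
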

\begin{proof}
 Since the subcube $Q^{x \vee y}$  is closed and $Q^x \cup Q^y \subset Q^{x \vee y},$ we have 
 $\langle Q^x \cup Q^y \rangle \subset Q^{x \vee y}.$ by Lemma \ref{lemma:S}. What remains is to show $ Q^{x \vee y} \subset \langle Q^x \cup Q^y \rangle$. 
 
 W.l.o.g we can assume that 
 $$x=[\ast]^k[0]^{n-k}$$
 $$y=[\ast]^j[0]^{k-j}[\ast]^{l-j}[0]^{n-k-l+j-2}ab,$$
where $a,b\in \{1,2,...,q-1\}$ and $0 \leq j \leq k \leq n-l+j-2$. 

We will use the following claim several times in the proof.
\begin{claim} \label{claim:A be a subset}
Let $A$ be a subset of $V(Q_{n,q})$. If a vertex u has at least 2 neighbors in $A$, then $u \in \langle  A \rangle$. 
\end{claim}
\begin{proof}
Let $u=u_1u_2\cdots u_n$. Since $u$ has at least 2 neighbors in $A$, w.l.o.g the two neighbors $y$ and $z$ of $u$ are $y=au_2u_3\cdots u_n$ and $z=bu_2\cdots u_n$ where $a \notin \{b,u_1\}$ and $b \notin \{a,u_1\}$ or $y=au_{2}u_{3}\cdots u_n$ and $z=u_{1}bu_{3}\cdots u_{n}$ where $a \neq u_1$ and $b \neq u_2$. 

If $y=au_2u_3\cdots u_n$ and $z=bu_1u_2\cdots u_n$, then $\langle y \cup z \rangle =\ast u_2u_3 \cdots u_n$ and thus $u \in \langle A \rangle$. 

If $y=au_{2}u_{3}\cdots u_n$ and $z=u_{1}bu_{3}\cdots u_{n}$, then $\langle y \cup z \rangle =\ast \ast u_3u_4\cdots u_n$ and thus $u \in \langle A \rangle$.  \end{proof}

For $ 0 \leq  i \leq k+l-2j$, let $z_i$ be 
$$z_i=[\ast]^{i+j}0^{n-i-j-2}\ast \ast.$$

We will prove by induction on $i$ that $Q^{z_i} \subset \langle Q^x \cup Q^y \rangle$ for $ 0 \leq i \leq k+l-2j$, which is sufficient to prove this lemma since $z_{k+l-2j}=x \vee y$.

Consider a vertex $u$ in $Q^{z_0} \backslash (Q^x \cup Q^y)$. If $u=u_1 u_2 \cdots u_{n-2}a0$ or $u=u_1 u_2\cdots u_{n-2}0b$ then it has one neighbor in $Q^{x}$ and one neighbor in $Q^{y}$. Then by Claim \ref{claim:A be a subset}, $u \in \langle Q^x \cup Q^y \rangle$. 

For a vertex $u=u_1u_2\cdots u_n$ we will write $u^{ab}=u_1u_2\cdots u_{n-2}ab$. Now if $u=u_1 u_2 \cdots u_{n-2}aj$ with $j \notin \{0,b\}$ then it has two neighbors $u^{ab}=u_1u_2 \cdots u_{n-2}ab$ and $u^{a0}=u_1u_2\cdots u_{n-2}a0$ and thus $u \in \langle Q^x \cup Q^y \rangle$. Similarly, a vertex $u=u_1u_2 \cdots u_{n-2}jb$ for $j \notin \{0,a\}$ is also in $\langle Q^x \cup Q^y \rangle$. Finally if $u=u_1u_2 \cdots u_{n-2} ij$ where $i \notin \{0,a\}$ and $j \notin \{0,b\}$, it has two neighbors in $Q^x$ and $Q^y$ and thus $u \in \langle Q^x \cup Q^y \rangle$. Indeed, one of them is $u_1u_2 \cdots u_{n-2} u_{n-1}b$ and another one is $u=u_1u_2 \cdots u_{n-2} au_{n}$.

Now assume that for $ i \geq 0$, $Q^{z_i} \subset \langle Q^x \cup Q^y \rangle$ and our goal is to show that $Q^{z_{i+1}} \subset \langle Q^x \cup Q^y \rangle$. 

Let a vertex $u \in Q^{z_{i+1}} \backslash (Q^{z_i} \cup Q^{x} \cup Q^{y})$. Thus $u$ has a neighbor in $Q_{z_i}$ and thus we would like to find another neighbor of $u$ in $\langle Q^x \cup Q^y \rangle$.

First let us consider the case when $ i \leq k-j-1$. Since $u \notin Q^x$, the last two digits of of $u$ cannot be both $0$. If $u=u_1u_2 \cdots u_{n-2}a0$, then it has a neighbor is $Q^x$ and by Lemma \ref{claim:A be a subset} $u \in \langle Q^x \cup Q^y \rangle$. Similarly, $u^{0b} \in \langle Q^x \cup Q^y \rangle$ and it is easy to see that $u^{\ast \ast} \in \langle Q^x \cup Q^y \rangle$.  

Now assume that $i \geq k-j$. Let $u=u_1u_2 \cdots u_{n-2}ab$. Let $r$ be the smallest value in the range $ 0 \leq r \leq k-j$ such that $u_{j+r+1}=\cdots=u_k=0$. We apply induction on $r$ to prove that $u \in \langle Q^x \cup Q^y \rangle$. Let $r=0$, then $u \in Q^y \subset \langle Q^x \cup Q^y \rangle$. Let us turn to the induction step. Thus $1 \leq r \leq k-j $,$u_{j+r}=1$ and $u_{j+r+1}=\cdots=u_k=0$. Since $u \notin Q^{z_i}$, we have $u_{i+j+1} \neq 0$, and so switching $u_{i+j+1}$ to $0$ we obtain a neighbor of $u$ in $Q^{z_i}$. Since $r \leq k-j \leq i$, then $j+r \neq i+j+1$. Therefore if we change the ($j+r$)th digit of $u$ to $0$ then by the induction hypothesis we get a vertex in $\langle Q^x \cup Q^y \rangle$. Thus by Claim $\ref{claim:A be a subset}$ $u \in \langle Q^x \cup Q^y \rangle$. 

Then it is easy to see that $u^{ij} \in \langle Q^x \cup Q^y \rangle$ for all $i,j \in \{0,1,\cdots,q-1\}$. 
\end{proof}

The following three lemmas were proven for the binary hypercube in \cite{balogh_bollobas_2006} and for the non-binary case the proofs are identical, by applying Lemmas (\ref{lemma:closed}),(\ref{lemma:S}), and (\ref{lemma:Q^x Q^y}). Therefore we ommit the proofs.

\begin{lemma} \label{lemma:The only subsets}
(i)The only subsets of the hypercube that are closed under $2$-neighbor percolation are those which are a union of disjoint subcubes that are at distance at least $3$ from each other.   

(ii) If a subcube $Q_l= \langle S \rangle $, then $|S| \geq \frac{l}{2}+1$.
\end{lemma}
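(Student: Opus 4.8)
The statement has two parts, which I would handle separately, both building on Lemma~\ref{lemma:Q^x Q^y}. For part (i), the forward implication is immediate: if $S$ is a disjoint union of subcubes $D_1,D_2,\dots$ pairwise at distance at least $3$, and some $v\notin S$ had two neighbors $a,b\in S$, then $d(a,b)\le 2$; either $a,b$ lie in a common $D_j$, contradicting that a subcube is closed, or they lie in distinct $D_i,D_j$, forcing $d(D_i,D_j)\le 2$, a contradiction. The reverse implication is the substantive direction. Given a closed set $S$, I would first check that each connected component $C$ of $S$ in the graph $Q_{n,q}$ is itself closed: a vertex outside $C$ either lies outside $S$, where it has at most one neighbor in $S\supseteq C$, or lies in another component, where it has none; hence $\langle C\rangle=C$. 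I would then take a maximal subcube $Q\subseteq C$ and argue $Q=C$: if not, connectivity supplies $v\in C\setminus Q$ adjacent to $Q$, whence $d(\{v\},Q)=1$ and Lemma~\ref{lemma:Q^x Q^y} gives that $\langle\{v\}\cup Q\rangle$ is a subcube strictly larger than $Q$ but still inside $\langle C\rangle=C$, contradicting maximality. Thus every component is a subcube, and two distinct components must be at distance at least $3$, since distance at most $2$ would let Lemma~\ref{lemma:Q^x Q^y} merge them into a single connected subcube inside $\langle S\rangle=S$, contradicting that they are distinct components.

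For part (ii), the plan is a potential-function argument over a merging process. Starting from the $m:=|S|$ singletons (each a $0$-dimensional subcube contained in $\langle S\rangle$), I repeatedly replace any two of the current subcubes that are at distance at most $2$ by their closure, which is again a subcube by Lemma~\ref{lemma:Q^x Q^y} and, by Lemma~\ref{lemma:closed}, still lies inside $\langle S\rangle$. When the process halts, the remaining subcubes are pairwise at distance at least $3$, so their union is closed and contains $S$, hence contains $\langle S\rangle$; since it also lies in $\langle S\rangle=Q_l$, it equals $Q_l$, which is connected, forcing exactly one subcube to remain. The key bookkeeping is that each merge decreases the number of subcubes by exactly one and, by the dimension bound $\dim(x\vee y)\le\dim(x)+\dim(y)+d(x,y)\le\dim(x)+\dim(y)+2$ underlying Lemma~\ref{lemma:S}, increases the total dimension by at most $2$. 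Passing from $m$ subcubes to a single subcube of dimension $l$ therefore takes exactly $m-1$ merges, so $l\le 0+2(m-1)$, i.e.\ $|S|=m\ge l/2+1$.

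The main obstacle I anticipate is not any single estimate but the careful justification that the merging process in part (ii) genuinely reconstructs $\langle S\rangle$ and terminates in one subcube. This needs three observations: that closures of subsets of $\langle S\rangle$ remain inside $\langle S\rangle$ (Lemma~\ref{lemma:closed}); that $\langle S\rangle$ is the minimal closed set containing $S$, so any closed superset of $S$ contains it; and that a family of subcubes pairwise at distance at least $3$ cannot have connected union unless it is a single subcube. Once these are in place, the dimension-counting is routine, and the same component-decomposition idea cleanly powers part (i).
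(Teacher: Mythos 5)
Your proposal is correct and takes essentially the same route as the paper, which omits the proof precisely because the binary-case argument of \cite{balogh_bollobas_2006} carries over verbatim via Lemmas \ref{lemma:closed}, \ref{lemma:S}, and \ref{lemma:Q^x Q^y}: closed sets decompose into connected components that maximal-subcube and merging arguments force to be subcubes at pairwise distance at least $3$, and the bound $|S|\geq \frac{l}{2}+1$ follows by counting that each of the $|S|-1$ merges increases total dimension by at most $d(x,y)\leq 2$. No gaps to report.
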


Before stating the next lemma, we need to give a definition. For a given bootstrap percolation process on $Q_{n,q}$ with an initially infected set of vertices $A \subset Q_{n,q}$, a subcube $Q_l \subset Q_{n,q}$ is said to be internally spanned if $\langle A \cap Q_l \rangle =Q_l$.

\begin{lemma} \label{lemma: internally spanned}
If the initially infected set percolates on the hypercube $Q_{n,q}$, then for each $k \leq n$ there is an integer $l$ such that $k \leq l \leq 2k$ and the hypercube $Q_{n,q}$ contains an internally spanned cube of dimension $l$. 
\end{lemma}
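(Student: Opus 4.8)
The plan is to realize the full percolation as a hierarchical sequence of merges of internally spanned subcubes and then extract a subcube of the desired scale by an intermediate-value argument. First I would note that since the initially infected set $A_0$ percolates, the whole cube $Q_n=\langle A_0\rangle$ is itself internally spanned, so if $n\le 2k$ we are done by taking $Q_l=Q_n$ (here $k\le n\le 2k$). Hence assume $n>2k$.

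The core of the argument is to build a \emph{merge tree} for the spanning of $Q_n$. Starting from the collection of singletons $\{v\}$, $v\in A_0$ (each a $0$-dimensional internally spanned subcube), I repeatedly pick two subcubes $Q^x,Q^y$ in the current collection with $d(x,y)\le 2$ and replace them by their join $Q^{x\vee y}=\langle Q^x\cup Q^y\rangle$, which is legitimate by Lemma~\ref{lemma:Q^x Q^y}. Each such step decreases the number of subcubes by one, so the process terminates. When it halts, no two remaining subcubes are within distance $2$, so their union is closed by Lemma~\ref{lemma:The only subsets}(i); since it contains $A_0$ it contains $\langle A_0\rangle=Q_n=V$, and because $Q_n$ is connected a disjoint union of subcubes pairwise at distance $\ge 3$ can equal $V$ only if it is the single cube $Q_n$. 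Recording each merge as an internal node with its two constituents as children yields a binary tree rooted at $Q_n$ whose leaves are the vertices of $A_0$. By induction along the tree every node $R$ equals the closure of the $A_0$-vertices below it and lies inside the cube it labels, so by Lemma~\ref{lemma:closed} each node is an internally spanned subcube; moreover, by Lemma~\ref{lemma:S} its dimension satisfies $\dim R\le \dim R'+\dim R''+2$, where $R',R''$ are its children.

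Next I would follow the chain $R^{(0)}=Q_n,R^{(1)},R^{(2)},\dots$ obtained by always descending to the child of larger dimension, ending at a leaf of dimension $0$. Writing $d_i=\dim R^{(i)}$, this is a strictly decreasing integer sequence from $n$ to $0$ with $d_i\le 2d_{i+1}+2$, i.e. $d_{i+1}\ge d_i/2-1$. Let $i^{*}$ be the least index with $d_{i^{*}}\le 2k$; it exists and is positive because $d_0=n>2k$ while the last term is $0$. Then $d_{i^{*}-1}\ge 2k+1$, so $d_{i^{*}}\ge (2k+1)/2-1=k-\tfrac12$, and integrality forces $d_{i^{*}}\ge k$. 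Thus $R^{(i^{*})}$ is an internally spanned subcube of dimension $l=d_{i^{*}}$ with $k\le l\le 2k$, as required.

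The routine part is the final arithmetic; the step I expect to require the most care is the construction and justification of the merge tree, specifically verifying that the greedy merging process always terminates in the single cube $Q_n$ (using connectivity together with the characterization of closed sets in Lemma~\ref{lemma:The only subsets}(i)) and that every tree node is genuinely internally spanned rather than merely a subcube encountered during the process. Establishing these two facts carefully is what makes the doubling bound $d_{i+1}\ge d_i/2-1$ applicable at every node.
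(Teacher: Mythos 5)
Your proof is correct and takes essentially the same route as the paper: the paper omits the proof (deferring to Balogh--Bollob\'as), but the argument it points to is exactly this merge construction---your merge tree is the content of Lemma~\ref{lemma:nested sequence}, and your descent plus intermediate-value step is the standard way to extract a cube of dimension between $k$ and $2k$ from the bound $\dim R \le \dim R' + \dim R'' + 2$. One small correction that does not affect the argument: the dimension sequence along your descending chain is only non-increasing, not strictly decreasing (merging a cube with one contained in it leaves the dimension unchanged), but you only need monotonicity and termination at a dimension-$0$ leaf.
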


\begin{lemma} \label{lemma:nested sequence}
Let $A \subset Q_{n,q}$ be such that $\langle A \rangle =Q_{n,q}$. Then there is a nested sequence $Q_0=Q_{i_1}^{x_{i_1}} \subset Q_{i_2}^{x_{i_2}} \subset \cdots \subset Q_{i_t}^{x_{i_t}}=Q_{n,q}$, of internally spanned subcubes with respect to $A$, where $2i_j+2 \geq i_{j+1}$ for all $j$, $ 0 \leq j \leq t-1$. Furthermore, for $j \geq 2$ each subcube $Q_{i_j}^{x_{i_j}}$ is spanned by two internally spanned cubes, namely by $Q_{i_{j-1}}^{x_{i_{j-1}}}$ and a subcube $Q_{m_{j-1}}$ with $m_{j-1} \leq i_{j-1}$ which is not a member of the sequence. 
\end{lemma}

Let $A_0$ be the set of initially infected vertices in a graph $G$ and let $$T_{G}(A_0)= \min \{t:A_t=\langle A_0 \rangle\}.$$ 

For the $n$-dimensional $q$-ary hypercube its maximum percolation time is defined to be  
$$M_q(n):= \max_{A: \langle A \rangle =Q_{n,q}} T_{Q_{n,q}}(A).$$

\section{Maximum percolation time}
In this section, we prove Theorem~\ref{Theorem1} by drawing on ideas from \cite{Michal}, beginning with a lemma on the monotonicity of the maximum percolation time. We then define several new operations, which are slight modifications of those introduced in \cite{Michal}. With those defined operations, we then establish six technical lemmas (which are similar to those established in \cite{Michal} but more technical) that characterize the spread of infection on $Q_{n,q}$ based on the set of initially infected vertices. Using these lemmas along with some preliminary results, we derive a recursive formula for the maximum percolation time, ultimately obtaining the desired closed-form expression for $M_{n,q}$.
 
\begin{lemma}
For any $n \in \mathbb{N}$, $M_{q}(n) \leq M_q(n+1)$. 
\end{lemma}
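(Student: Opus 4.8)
The plan is to build, from an optimal configuration on $Q_{n,q}$, a percolating configuration on $Q_{n+1,q}$ whose percolation time is at least $M_q(n)$; since $M_q(n+1)$ is by definition the maximum of $T_{Q_{n+1,q}}(\cdot)$ over all percolating sets, this immediately yields the claim. I would first fix a set $A \subseteq V(Q_{n,q})$ with $\langle A \rangle = Q_{n,q}$ and $T_{Q_{n,q}}(A) = M_q(n)$ (such a maximizer exists since $Q_{n,q}$ is finite), and view $Q_{n+1,q}$ as $q$ disjoint copies (layers) $L_0,\dots,L_{q-1}$ of $Q_{n,q}$ obtained by fixing the value of the last coordinate. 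Writing $(w,j)$ for the vertex of $L_j$ corresponding to $w \in V(Q_{n,q})$, two vertices $(w,j)$ and $(w',j')$ are adjacent in $Q_{n+1,q}$ precisely when either $j=j'$ and $w\sim w'$ in $Q_{n,q}$ (a \emph{horizontal} edge) or $w=w'$ and $j\neq j'$ (a \emph{vertical} edge). As the initial set I would take an identical copy of $A$ in every layer, namely $A' = \bigcup_{j=0}^{q-1}\{(w,j): w \in A\}$.

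The heart of the argument is a synchronization claim, proved by induction on the time step $t$: if $A_t$ denotes the infected set of the single-layer $2$-neighbor process on $Q_{n,q}$ started from $A$, then the infected set of the $Q_{n+1,q}$-process started from $A'$ at time $t$ is exactly $\bigcup_{j}\{(w,j): w \in A_t\}$. For the inductive step, consider a vertex $(w,j)$ not infected at time $t$, so that $w \notin A_t$. Each of its vertical neighbors $(w,j')$ also satisfies $w\notin A_t$ and is therefore uninfected at time $t$, so the vertical edges contribute nothing; hence $(w,j)$ has exactly $|N_w \cap A_t|$ infected neighbors, all horizontal. Consequently $(w,j)$ becomes infected at time $t+1$ if and only if $|N_w \cap A_t| \geq 2$, i.e. if and only if $w \in A_{t+1}$, which closes the induction.

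From the synchronization claim it follows that $\langle A' \rangle = \bigcup_{j}\{(w,j): w \in \langle A\rangle\} = Q_{n+1,q}$, so $A'$ percolates, and that the process reaches its final state in exactly $M_q(n)$ steps. Therefore $M_q(n+1) \geq T_{Q_{n+1,q}}(A') = M_q(n)$, as desired. The only genuine subtlety — and the step I would be most careful about — is the non-interference of the vertical edges: one must verify that a still-healthy vertex never receives help from a vertical neighbor, which is exactly what the synchrony of the layers guarantees (a vertical neighbor of an uninfected $(w,j)$ sits above the same uninfected $w$ in another synchronized layer). The base cases $M_q(0)=0$, $M_q(1)=1$, $M_q(2)=3$ are recorded in Theorem~\ref{Theorem1} and require no separate treatment.
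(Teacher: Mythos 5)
Your proposal is correct and takes essentially the same route as the paper: the paper also lifts an optimal initial set $A_0$ on $Q_{n,q}$ to $A_0' = \{(a_1,\dots,a_n,i) : (a_1,\dots,a_n) \in A_0,\ i \in \{0,\dots,q-1\}\}$, i.e.\ a copy of $A_0$ in every layer, and concludes $T_{Q_{n+1,q}}(A_0') = T_{Q_{n,q}}(A_0) = M_q(n) \leq M_q(n+1)$. Your synchronization induction merely spells out the layer-by-layer non-interference argument that the paper states without proof.
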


\begin{proof}
Let $A_0$ be the set of initially infected vertices such that $\langle A_0 \rangle =Q_{n,q}$ and $T_{Q_{n,q}}(A_0)=M_q(n)$. Let 
$$A_0^{'}=\{(a_1,\dots,a_n,i):(a_1,\dots,a_n) \in A_0 \; \text{and} \;i \in \{0,\dots,q-1\}\}$$
Then $\langle A_0^{'} \rangle=Q_{n+1,q}$ and $M_q(n)=T_{Q_{n,q}}(A_0)=T_{Q_{n+1}}(A_0^{'}) \leq M_{q}(n+1)$.
\end{proof}

We need to define the following operation. Let $n, d,n_1, n_2 \in \{1,2,\dots\}$ with $n \geq d+n_1+n_2$ and $a_i \in \{1,2,\dots,q-1\}$ for all $i \in [d]$. For each $x \in \{0,1,\cdots,q-1\}^n$ with $x=x_1x_2\dots x_n$, set
$$\left \lVert x \right \rVert_{n_1,n_2}^{a_1,\cdots,a_d}=\left(\sum_{i=1}^{n-d}\mathbb{I}_{\{x_i \neq 0\}} \right)\mathbb{I}_{\{\sum_{i=1}^{n_1}x_i >0\}} \mathbb{I}_{\{\sum_{i=n_1+1}^{n_1+n_2}x_i >0\}}\prod_{i=1}^d \mathbb{I}_{\{x_{n-d+i}=a_i\}}.$$

In words, $\left \lVert x \right \rVert_{n_1,n_2}^{a_1,\cdots,a_d}$ is the number of the none-zero elements in $\{x_1,\dots,x_{n-d}\}$ if the following three conditions are satisfied:

(1) There is at least one nonzero element in $\{x_1,x_2,\dots,x_{n_1}\}$,

(2) There is at least one nonzero  element in $\{x_{n_1+1},\dots,x_{n_1+n_2}\}$,

and

(3) The last $d$ digits of $x$ are exactly equal to $a_1,a_2,\dots,a_d$, i.e.,
$x_{n-d+i}=a_i$ for all $i \in [d]$. 

By the same spirit we will define 
$$||x||:=\sum_{i=1}^{n}\mathbb{I}_{\{x_i \neq 0\}},$$
$$||x||^{a_1,\dots,a_d}:=\left(\sum_{i=1}^{n-d}\mathbb{I}_{\{x_i \neq 0\}} \right)\prod_{i=1}^d \mathbb{I}_{\{x_{n-d+i}=a_i\}},$$

$$\left \lVert x \right \rVert_{n_1}^{a_1,\cdots,a_d}:=\left(\sum_{i=1}^{n-d}\mathbb{I}_{\{x_i \neq 0\}} \right)\mathbb{I}_{\{\sum_{i=1}^{n_1}x_i >0\}} \prod_{i=1}^d \mathbb{I}_{\{x_{n-d+i}=a_i\}},$$

$$\left \lVert x \right \rVert_{n_1}:=\left(\sum_{i=1}^{n}\mathbb{I}_{\{x_i \neq 0\}} \right)\mathbb{I}_{\{\sum_{i=1}^{n_1}x_i >0\}},$$

and 
$$\left \lVert x \right \rVert_{n_1,n_2}:=\left(\sum_{i=1}^{n}\mathbb{I}_{\{x_i \neq 0\}} \right)\mathbb{I}_{\{\sum_{i=1}^{n_1}x_i >0\}} \mathbb{I}_{\{\sum_{i=n_1+1}^{n_1+n_2}x_i >0\}}.$$

We now proceed to prove six technical lemmas that provide useful characterizations of the spread of infection for various configurations of initially infected vertices.

Recall that $A_0$ denotes the set of initially infected vertices on $Q_{n,q}$.

\begin{lemma} \label{lemma:ST1}
Let $k,l \in \mathbb{N}_0$, $n=k+l$. Let $S=[*]^k[0]^l$ and $T=[0]^k[*]^l$. With $A_0=S \cup T,$ then we have
$$A_t \supset \{x \in \{0,1,\dots,q-1\}^n: \left \lVert x \right \rVert \leq t+1\},$$
for all $t \in \mathbb{N}_0$.
\end{lemma}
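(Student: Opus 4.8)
The plan is to prove the containment by induction on $t$, treating the claimed set $\{x : \|x\| \le t+1\}$ as a ``weight ball'' centered at the origin and showing that each successive weight shell becomes infected in one additional step. The key observation is that the specific geometry of $S$ and $T$ enters only through the base case; the inductive step is a purely local argument that uses nothing about $A_0$ beyond the induction hypothesis.

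For the base case $t=0$, I would verify directly that $\{x : \|x\| \le 1\} \subseteq A_0 = S \cup T$. The all-zero vertex lies in $S$ (in fact in both $S$ and $T$). A vertex of weight exactly $1$ has its unique nonzero coordinate in some position $i$; if $i \le k$ then its last $l$ coordinates all vanish, so it lies in $S=[*]^k[0]^l$, whereas if $i > k$ then its first $k$ coordinates all vanish, so it lies in $T=[0]^k[*]^l$. This is precisely the point at which both halves of $A_0$ are needed.

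For the inductive step, assume $A_t \supseteq \{x : \|x\| \le t+1\}$ and take any $x$ with $\|x\| = t+2$ (vertices of smaller weight already lie in $A_t \subseteq A_{t+1}$). Since $t+2 \ge 2$, the vertex $x$ has at least two nonzero coordinates, say in positions $i \neq j$. Setting coordinate $i$ (respectively $j$) to $0$ produces a neighbor of $x$ of weight $t+1$, and these two neighbors are distinct because they disagree in positions $i$ and $j$. By the induction hypothesis both lie in $A_t$, so $x$ has at least two infected neighbors and therefore $x \in A_{t+1}$ by the $r=2$ update rule. Hence $A_{t+1} \supseteq \{x : \|x\| \le t+2\}$, which closes the induction.

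I do not expect a serious obstacle here; the argument is essentially routine. The only points needing a little care are (i) the base case, where one checks that weight-$1$ vertices split cleanly between $S$ and $T$ according to the location of their nonzero coordinate, and (ii) the guarantee that the two chosen neighbors in the inductive step are genuinely distinct, which follows from $\|x\| \ge 2$. Finally, for $t \ge n-1$ the set $\{x : \|x\| \le t+1\}$ is all of $Q_{n,q}$ and the statement becomes automatic, since weights exceeding $n$ contribute no vertices; thus no separate treatment of large $t$ is required.
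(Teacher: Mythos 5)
Your proof is correct and follows essentially the same approach as the paper: induction on $t$, with the base case handled by noting weight-$\le 1$ vertices lie in $S\cup T$, and the inductive step obtained by zeroing out two distinct nonzero coordinates of a weight-$(t+2)$ vertex to get two infected neighbors. Your write-up is simply a more detailed version of the paper's argument.
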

\begin{proof}
Let $t=0$. For every vertex $x$ with $\left \lVert x \right \rVert \leq 1$, we have $x \in A_0$. 

Now assume that for $t \geq 0$
$$A_t \supset \{x \in \{0,1,\dots,q-1\}^n: \left \lVert x \right \rVert \leq t+1\},$$

Consider a vertex $x$ with $\left \lVert x \right \rVert =t+2 \geq 2$. This vertex $x$ has at least 2 neighbors $y$ with $\left \lVert y \right \rVert =t+1$. Therefore, $x \in A_{t+1}$. Thus
$$A_{t+1} \supset \{x \in \{0,1,\dots,q-1\}^n: \left \lVert x \right \rVert \leq t+2\}.$$
\end{proof}

\begin{lemma} \label{lemma:ST2}
Let $k,l \in \mathbb{N}_0$, $n=k+l+1$. Let $S=[*]^k[0]^{l+1}$ and $T=[0]^k[*]^li$, where $i \neq 0$.With $A_0=S \cup T,$ then we have

$$A_t \supset \left([0]^{k+l}* \right) \bigcup_{j=0,i} \{x \in \{0,1,\dots,q-1\}^n: 1 \leq \left \lVert x \right \rVert^{j} \leq t\} \bigcup_{j \neq 0,i} \{x \in \{0,1,\dots,q-1\}^n: 1 \leq \left \lVert x \right \rVert^{j} \leq t-1\},  $$ 
for all $t \in \mathbb{N} $.
\end{lemma}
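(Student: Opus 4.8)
The plan is to prove the claimed containment by induction on $t$, with the base case $t=1$. Throughout I would write $w(x):=\sum_{i=1}^{k+l}\mathbb{I}_{\{x_i\neq 0\}}$ for the number of nonzero entries among the first $n-1=k+l$ coordinates, so that $\lVert x\rVert^{j}=w(x)\,\mathbb{I}_{\{x_n=j\}}$, and I would denote by $W_t$ the right-hand side of the asserted inclusion. The first observation is that the line $L:=[0]^{k+l}*$ is fully infected already at step $1$: the vertex $0^n$ lies in $S$ and the vertex $[0]^{k+l}i$ lies in $T$, so both are in $A_0$; since any two vertices of $L$ are adjacent, every remaining vertex of $L$ has these two infected neighbors and hence joins $A_1$. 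This settles the $w=0$ contribution to the target for every $t\ge 1$.

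For the base case $t=1$ it then remains to place into $A_1$ every $x$ with $x_n\in\{0,i\}$ and $w(x)=1$ (the part with $x_n\notin\{0,i\}$ is empty, since it would require $1\le w\le 0$). Here I would split according to whether the unique nonzero coordinate of $x$ sits in the first block $\{1,\dots,k\}$ or in the middle block $\{k+1,\dots,k+l\}$. If $x_n=0$ and the nonzero entry lies in the first block, then $x\in S\subset A_0$ directly; if instead it lies in the middle block, then zeroing it gives $0^n\in S$ while switching $x_n$ from $0$ to $i$ gives a vertex of $T$, so $x$ has two neighbors in $A_0$ and enters $A_1$. The case $x_n=i$ is symmetric, with the roles of $S$ and $T$ interchanged.

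For the inductive step I would assume $A_t\supset W_t$ (and use monotonicity $A_t\subset A_{t+1}$) and show that every vertex of $W_{t+1}\setminus W_t$ enters $A_{t+1}$. These new vertices are precisely those with $x_n\in\{0,i\}$ and $w(x)=t+1$, together with those with $x_n\notin\{0,i\}$ and $w(x)=t$. For a vertex of the first type, since $w(x)=t+1\ge 2$ there are at least two nonzero coordinates among the first $k+l$; zeroing any one of them keeps $x_n$ fixed and produces a weight-$t$ vertex whose last coordinate is still in $\{0,i\}$, hence a vertex of $W_t$. Two such choices give two infected neighbors, so $x\in A_{t+1}$. For a vertex $x$ of the second type, with $x_n=j\notin\{0,i\}$ and $w(x)=t\ge 1$, I would instead move in the last coordinate: replacing $j$ by $0$ and by $i$ yields two distinct neighbors, each of weight $t$ and with last coordinate in $\{0,i\}$, hence both in $W_t\subset A_t$, so again $x\in A_{t+1}$.

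I do not expect a serious obstacle in this argument; the only genuinely delicate point is the base case, where one must track whether the single nonzero coordinate lies in the $S$-block or the $T$-block and verify that the correct neighbor indeed belongs to $S$ or to $T$. The inductive step reduces entirely to exhibiting two neighbors inside $W_t$, and the weight bounds $1\le\lVert\cdot\rVert^{j}\le t$ guarantee that these neighbors lie in $W_t$ as soon as $t\ge 1$, which is exactly the regime of the step.
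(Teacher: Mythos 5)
Your proposal is correct and follows essentially the same route as the paper's own proof: induction on $t$, with the line $[0]^{k+l}*$ and the weight-one vertices handled at $t=1$, weight-$(t+1)$ vertices with last coordinate in $\{0,i\}$ infected by zeroing one of their $\geq 2$ nonzero coordinates, and vertices with last coordinate $j\notin\{0,i\}$ infected via the two neighbors obtained by switching the last coordinate to $0$ and to $i$. The only difference is organizational: you run a single induction on the full set $W_t$, whereas the paper first establishes the $\{0,i\}$ part inductively and then derives the $j\notin\{0,i\}$ part from it, but the neighbor constructions are identical.
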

\begin{proof}
Let $t=1$. It is easy to see every vertex in $[0]^{k+l}*$ has two neighbors in $ S \cup T$. Therefore, $[0]^{k+l}* \subset A_1$. 
Now consider $x$ such that $\left \lVert x \right \rVert^0=1$. Either $x \in S$ or there exists exactly $j$ with $ k<j \leq n-1$ such that $x_j \neq 0 $. For the latter case, $x$ has one neighbor in $S$ and one neighbor in $T$. 
For the case where $\left \lVert x \right \rVert^{i}$, the analysis is very similar. 
Therefore $$A_1 \supset \left([0]^{k+l}* \right) \bigcup_{j=0,i} \{x \in \{0,1,\dots,q-1\}^n: \left \lVert x \right \rVert^{j}=1\}. $$

Assume for $t \geq 1$, 
$$A_t \supset \bigcup_{j=0,i} \{x \in \{0,1,\dots,q-1\}^n: 1 \leq \left \lVert x \right \rVert^{j} \leq t\}.$$
Consider $\left \lVert x \right \rVert^0=t+1$. It is easy to see that $x$ has two neighbors $y$ with $\left \lVert y \right \rVert^0=t$. Therefore, $x \in A_{t+1}$. Similarly, $x \in A_{t+1}$ with $\left \lVert x \right \rVert^i=t+1$. Thus 
$$A_{t+1} \supset \bigcup_{j=0,i} \{x \in \{0,1,\dots,q-1\}^n: 1 \leq \left \lVert x \right \rVert^{j} \leq t+1\}.$$

Now let $t\geq 2$ and consider $x$ with $\left \lVert x \right \rVert^{j}=t-1$ where $j \notin \{0,i\}$. Note that $x$ has one neighbor $y_1$ in $\{y: \left \lVert y \right \rVert^{0}=t-1\}$ and one neighbor $y_2$ in $\{y: \left \lVert y \right \rVert^{i}=t-1\}$. Moreover, $y_1,y_2 \in A_{t-1}$. Therefore $x \in A_{t}$. 
\end{proof}

\begin{lemma} \label{lemma:ST3}
Let $k,l \in \mathbb{N}_0$, $n=k+l+2$. Let $S=[*]^k[0]^{l+2}$ and $T=[0]^k[*]^lab$. With $A_0=S \cup T,$ where $a,b \neq 0$, then we have
$$A_1 \supset \{[0]^{k+l}0b,[0]^{k+l}a0,[0]^{k+l}ab,[0]^{k+l}00\},$$
$$A_2 \supset \{[0]^{k+l}0*,[0]^{k+l}a*,[0]^{k+l}*0,[0]^{k+l}*b\},$$
$$A_3 \supset \{[0]^{k+l}**\},$$
and for all $t \geq 2$,

\begin{align}
 A_t & \supset \{[0]^{k+l}**\}\label{eq:0b}
 \bigcup\{x \in \{0,1,\dots,q-1\}^n:  1 \leq \left \lVert x \right \rVert^{0b} \leq t-1\}\\ 
 \label{eq:a0}
&\bigcup \{x \in \{0,1,\dots,q-1\}^n:  1 \leq \left \lVert x \right \rVert^{a0} \leq t-1\}\\ 
\label{eq:0i}
&\bigcup_{i \neq b} \{x \in \{0,1,\dots,q-1\}^n:  1 \leq \left \lVert x \right \rVert^{0i} \leq t-2\} \\  
\label{eq:jo}
&\bigcup_{j \neq a} \{x \in \{0,1,\dots,q-1\}^n:  1 \leq \left \lVert x \right \rVert^{j0} \leq t-2\} \\ 
\label{eq:ai}
&\bigcup_{i \neq 0} \{x \in \{0,1,\dots,q-1\}^n:  1 \leq \left \lVert x \right \rVert^{ai} \leq t-2\} \\ 
\label{eq:jb}
&\bigcup_{j \neq 0} \{x \in \{0,1,\dots,q-1\}^n:  1 \leq \left \lVert x \right \rVert^{jb} \leq t-2\} \\ 
\label{eq:cd}
&\bigcup_{\substack{c \neq 0,a \\ d \neq 0,b}} \{x \in \{0,1,\dots,q-1\}^n:  1 \leq \left \lVert x \right \rVert^{cd} \leq t-3\}. 
 \end{align}

\end{lemma}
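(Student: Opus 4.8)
The plan is to recast the seven displayed unions as a single linear ``infection–time'' threshold and then induct on $t$. For a vertex $x$ write $(u,v)=(x_{n-1},x_n)$ for its last two coordinates (its \emph{state}) and $w(x)=\sum_{i=1}^{n-2}\mathbb{I}_{\{x_i\neq 0\}}$ for its weight on the first $n-2$ coordinates. Reading off the bounds, the restriction on $w(x)$ is $t-1$ when $(u,v)\in\{(0,b),(a,0)\}$, it is $t-3$ when $u\notin\{0,a\}$ and $v\notin\{0,b\}$, and it is $t-2$ in each of the remaining states. Writing $\tau(u,v)\in\{-1,-2,-3\}$ for the corresponding offset, the lemma for $t\ge 3$ (where $\{[0]^{k+l}**\}$ absorbs every weight-$0$ vertex) is the single inclusion
\[
A_t\supseteq\{x: w(x)\le t+\tau(u,v)\}.
\]
The offset $\tau$ records how far the state is from the two seed patterns $(0,0)$ carried by $S$ and $(a,b)$ carried by $T$: the two ``bridge'' states $(0,b),(a,0)$ reachable in one step from both seeds are cheapest ($-1$); a state with exactly one coordinate in $\{0,a\}$ resp.\ $\{0,b\}$ costs $-2$; a state with neither costs $-3$. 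I would establish this inclusion directly for $t\le 3$ (this is where $\{[0]^{k+l}**\}$ first appears, at $t=3$), and use those low values of $t$ to seed the induction.

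For the inductive step, assume the inclusion for $A_t$ and take $x$ with $w(x)\le (t+1)+\tau(u,v)$; I must exhibit two neighbours in $A_t$. The workhorse is the \emph{weight-reducing} neighbour: switching any nonzero coordinate among the first $n-2$ to $0$ fixes the state $(u,v)$ and lowers the weight to $w(x)-1\le t+\tau(u,v)$, so by the induction hypothesis it lies in $A_t$. When $w(x)\ge 2$ there are at least two such neighbours, and this disposes of all high-weight vertices uniformly across the nine state-types. The problem thus reduces to the low-weight vertices $w(x)\in\{0,1\}$. The $w=0$ vertices are the $[0]^{k+l}**$, already in $A_3\subseteq A_t$. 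For $w(x)=1$ I keep the single weight-reducing neighbour $[0]^{k+l}uv$ (weight $0$, hence in $A_t$) and supply the second neighbour by a \emph{state-changing} move: altering $x_{n-1}$ or $x_n$ to push one coordinate into $\{0,a\}$ or $\{0,b\}$ reaches a state $(u',v')$ with $\tau(u',v')=\tau(u,v)+1$, and the resulting weight-$1$ vertex satisfies $1\le t+\tau(u',v')$ precisely when $x$ was claimed for $A_{t+1}$. The $+1$ jump in $\tau$ toward the ``freer'' states is exactly what compensates for being unable to reduce the weight, and I regard this matching of offsets as the structural heart of the argument.

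The main obstacle is the low-weight boundary, and in particular the base cases $t\le 3$. Two points need care. First, the $w(x)=1$ analysis must be run through each of the nine state-types to confirm that a legitimate $\tau$-increasing neighbour always exists; for $\tau=-2$ one must choose the move toward the anti-diagonal $(0,b),(a,0)$ rather than toward $(0,0),(a,b)$, and for $\tau=-1$ no further increase is possible, so these two corners must be seeded earlier. This is the ``more technical'' bookkeeping the paper warns about. Second, and more subtly, in the earliest steps the second neighbour of a weight-$1$ vertex is available for a reason the generic weight/state formula does not see: it depends on \emph{where} the unique nonzero coordinate sits. If it lies among the first $k$ coordinates one completes the pair using a neighbour in $S$ (whose defining constraint is on coordinates $k+1,\dots,n$), whereas if it lies in the middle block of length $l$ one instead uses a neighbour in $T$ (whose constraint is on the first $k$ coordinates). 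Both sub-cases close, but the split must be made explicitly and is what drives the base case. Finally I would flag the one imprecision in the statement: since $\{[0]^{k+l}**\}\subseteq A_t$ only from $t=3$ onward, at $t=2$ the displayed formula must be read as the weaker inclusion stated on the $A_2$ line, the ``bad–bad'' weight-$0$ states entering only at step $3$.
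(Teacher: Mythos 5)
Your proposal is correct and follows essentially the same route as the paper's proof: the paper runs the same induction on $t$, just family-by-family (base cases $t=2,3,4$ for the three groups of unions), using exactly the two mechanisms you describe --- two weight-reducing neighbours once the weight is at least $2$, and, for weight-$1$ vertices, the weight-zero vertex of the same state together with either an $S$/$T$-neighbour (chosen according to where the unique nonzero coordinate sits) or a neighbour whose state is one step closer to $(0,b),(a,0)$ --- so your $\tau$-offset formulation is a clean repackaging rather than a different argument. Your flag about $\{[0]^{k+l}**\}$ lying in $A_t$ only from $t=3$ onward is also accurate; the paper's separate $A_2$ and $A_3$ lines carry that distinction, and its unified display for $t\ge 2$ should indeed be read with that caveat.
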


\begin{proof}
Consider the vertex $[0]^{k+l}0b$ and it has one neighbor in $S$ and one neighbor in $T$. Therefore this vertex is in $A_1$. The same holds for the vertex $[0]^{k+l}a0$. Note that both $[0]^{k+l}ab$
and $[0]^{k+l}00$ are in $A_0$. 

First consider the vertex $[0]^{k+l}0j$ where $j \notin \{0,b\}$. It has two neighbors in $A_1$, namely, $[0]^{k+l}0b$ and $[0]^{k+1}00$. The same holds for the vertices in $\{[0]^{k+l}0*,[0]^{k+l}a*,[0]^{k+l}*0,[0]^{k+l}*b\}$. Now Consider the vertex $[0]^{k+l}ij$ where $i \notin \{0,a\}$ and $j \notin \{0,b\}$.This vertex has at least 2 neighbors in $A_2$. Therefore, this vertex is in $A_3$.

Now let $t=2$ and consider a vertex $x$ with $\left \lVert x \right \rVert^{0b}=1$. It has at least 2 neighbors in $A_1$. Indeed, one of the neighbors of vertex $x$ is $[0]^{k+l}0b$ and another one is in $S$ if there exists a $j$ with  $ 1 \leq j \leq k$ such that $x_j \neq 0$  or in $T$ if there exists a $j$ with $ k+1 \leq j \leq k+l$ such that $x_j \neq 0$. The analysis is the same for $x$ with $\left \lVert x \right \rVert^{a0}=1$.

Now assume that for $t \geq 2$, 
\begin{equation*}
 A_t \supset \{x \in \{0,1,\cdots,q-1\}^n:  1 \leq \left \lVert x \right \rVert^{0b} \leq t-1\}\bigcup \{x \in \{0,1,\cdots,q-1\}^n:  1 \leq \left \lVert x \right \rVert^{a0} \leq t-1\}.
\end{equation*}
Consider a vertex $x$ with $\left \lVert x \right \rVert^{0b}=t$ and it has 2 neighbors $y$ satisfying $\left \lVert y \right \rVert^{0b}=t-1$. By the hypothesis both neighbors are in $A_t$ and thus $x \in A_{t+1}$. The analysis for the case where the vertex $x$ satisfies $\left \lVert x \right \rVert^{a0}=t$ is the same. This finishes the proof of (\ref{eq:0b}) and (\ref{eq:a0}). 

Now let us prove (\ref{eq:0i})--(\ref{eq:jb}). Let $t=3$ and consider a vertex $x$ with $\left \lVert x \right \rVert^{0i}=1$. The vertex $x$ has 2 neighbors in $A_2$. Indeed one is $[0]^{k+l}0i$ and another neighbor is $y$ satisfying $\left \lVert y \right \rVert^{0b}=1$. Therefore, the vertex $x$ is in $A_3$. The analysis is the same for the vertex $x$ satisfying (\ref{eq:jo}),(\ref{eq:ai}), or (\ref{eq:jb}). Thus, for $t=3$, (\ref{eq:0i})--(\ref{eq:jb}) are satisfied. 

Let us assume for $t \geq 3$,
\begin{align*}
A_t \supset \bigcup_{i \neq b} \{x \in \{0,1,\cdots,q-1\}^n:  1 \leq \left \lVert x \right \rVert^{0i} \leq t-2\}. 
\end{align*}
Now consider a vertex $x$ satisfying $\left \lVert x \right \rVert^{0i}=t-1$. It is easy to see that $x$ has at least 2 neighbors $y$ satisfying $\left \lVert y \right \rVert^{0i}=t-2$. Therefore, $x$ is in $A_{t+1}$. Similarly, (\ref{eq:jo}),(\ref{eq:ai}), and (\ref{eq:jb}) can be proved.

Now we will prove (\ref{eq:cd}). Let $t=4$ and consider a vertex $x$ satisfying $\left \lVert x \right \rVert^{cd}=1$ with $c\notin \{0,a\}$ and $d \notin \{0,b\}$. Note that $x$ has at least two neighbors in $A_3$, of one which is $[0]^{k+l}cd$ and another is $y$ satisfying $\left \lVert y \right \rVert^{c0}=1$. 

We assume that for $t \geq 4$,
\begin{align*}
 A_t \supset \bigcup_{\substack{c \neq 0,a \\ d \neq 0,b}} \{x \in \{0,1,\dots,q-1\}^n:  1 \leq \left \lVert x \right \rVert^{cd} \leq t-3\}.    
\end{align*}

Consider a vertex $x$ satisfying $\left \lVert x \right \rVert^{cd}=t-2$ and it has at least two neighbors $y$ satisfying $\left \lVert y \right \rVert^{cd}=t-3$. Thus the vertex $x$ is in $A_{t+1}$. 
\end{proof}

\begin{lemma} \label{lemma:ST4}
Let $k,l \in \mathbb{N}_0$, $n=k+l$. Let $S=[*]^k[0]^l$ and $T=[0]^k[*]^l$. With $A_0=S \cup T,$ then we have
$$A_t \cap \{x \in \{0,1,\dots,q-1\}^n: \left \lVert x \right \rVert_{k,l} \geq t+2\}=\emptyset,$$
for all $ 0 \leq t \leq k+l-2$.

\end{lemma}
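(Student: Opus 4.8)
The plan is to induct on $t$, proving the equivalent statement that every vertex in $A_t$ satisfies $\lVert x \rVert_{k,l} \le t+1$. For the base case $t=0$, observe that each vertex of $A_0 = S \cup T$ lies entirely in one half: vertices of $S$ have all of their last $l$ coordinates equal to $0$, and vertices of $T$ have all of their first $k$ coordinates equal to $0$. In either case one of the two indicator factors in the definition of $\lVert \cdot \rVert_{k,l}$ vanishes, so $\lVert x \rVert_{k,l} = 0 \le 1$, which is the claim at $t=0$.

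For the inductive step, suppose the claim holds at level $t$ with $0 \le t \le k+l-3$, i.e.\ every vertex of $A_t$ satisfies $\lVert \cdot \rVert_{k,l} \le t+1$. Take any $x$ with $\lVert x \rVert_{k,l} \ge t+3$; I must show $x \notin A_{t+1}$. If $x \in A_t$, the inductive hypothesis already forces $\lVert x \rVert_{k,l} \le t+1$, a contradiction, so it suffices to rule out $x \in A_{t+1}\setminus A_t$, i.e.\ to show that $x$ has at most one neighbor in $A_t$. Write $p$ for the number of nonzero coordinates of $x$ among the first $k$ positions and $s$ for the number among the last $l$ positions; since $\lVert x \rVert_{k,l} \ge t+3 \ge 3$ we have $p \ge 1$, $s \ge 1$, and $p+s = \lVert x \rVert = \lVert x \rVert_{k,l} \ge t+3$ (both indicators being $1$).

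The heart of the argument is to control how much a single coordinate flip can lower $\lVert \cdot \rVert_{k,l}$. A neighbor $y$ of $x$ changes exactly one coordinate, so $\lVert y \rVert$ differs from $\lVert x \rVert$ by at most $1$; moreover $\lVert y \rVert_{k,l}$ equals $\lVert y \rVert$ unless the flip empties one of the two halves, in which case the vanishing indicator makes it drop to $0$. If no half is emptied, then $\lVert y \rVert_{k,l} = \lVert y \rVert \ge \lVert x \rVert - 1 \ge t+2 > t+1$, so by the inductive hypothesis $y \notin A_t$. Hence the only neighbors that can lie in $A_t$ are those that empty a half, and a flip can empty the first half only when $p=1$ (zeroing the unique nonzero first-half coordinate) and the second half only when $s=1$; each possibility contributes at most one neighbor. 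Thus $x$ has at most $\mathbb{I}_{\{p=1\}} + \mathbb{I}_{\{s=1\}}$ neighbors in $A_t$.

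Finally, two such neighbors would force $p=s=1$ and therefore $\lVert x \rVert_{k,l} = p+s = 2$, contradicting $\lVert x \rVert_{k,l} \ge 3$; so $x$ has at most one neighbor in $A_t$ and cannot enter $A_{t+1}$. This yields $A_{t+1} \cap \{x : \lVert x \rVert_{k,l} \ge t+3\} = \emptyset$, completing the induction across the full range $0 \le t \le k+l-2$. The main obstacle, and the only genuinely delicate point, is precisely the discontinuity of $\lVert \cdot \rVert_{k,l}$ at the instant a half is emptied: away from that event the quantity changes by at most $1$ per step, so the inductive bound propagates cleanly, whereas the discontinuous drop to $0$ must be counted carefully to see that it can occur in at most one of the two halves once $\lVert x \rVert_{k,l} \ge 3$. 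The constraint $t \le k+l-2$ merely keeps the target set from being trivially empty and plays no active role in the induction.
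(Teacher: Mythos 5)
Your proof is correct and follows essentially the same route as the paper's: induction on $t$, showing that a vertex $x$ with $\lVert x \rVert_{k,l} \geq t+3$ can have at most one neighbor in $A_t$, because any neighbor not emptying one of the two halves still has $\lVert \cdot \rVert_{k,l} \geq t+2$, and emptying both halves would force $\lVert x \rVert_{k,l} = 2$. Your write-up is in fact more explicit than the paper's (which leaves the ``at most one emptying neighbor'' count implicit), but the underlying argument is identical.
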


\begin{proof}
Let $t=0$ and consider a vertex $x$ with $\left \lVert x \right \rVert_{k,l}=2$. Then it is clear that $x \notin A_0$. 
Now let $t \geq 0$ and assume that 
$$A_t \cap \{x \in \{0,1,\dots,q-1\}^n: \left \lVert x \right \rVert_{k,l} \geq t+2\}=\emptyset.$$
Consider a vertex $x$ with $\left \lVert x \right \rVert_{k,l} \geq t+3$ and it has at most one neighbor $y$ satisfying $\left \lVert y \right \rVert_{k,l} < t+2$. Indeed if $\sum_{j=1}^k \mathbb{I}_{\{x_j \neq 0\}}=1$, we have $y=[0]^kx_{k+1}\cdots x_n$ and if $\sum_{j=k+1}^n \mathbb{I}_{\{x_j \neq 0\}}=1$, we have $y=x_1 \cdots x_k[0]^{l}$. Therefore, $x \notin A_{t+1}$.
\end{proof}

\begin{lemma} \label{lemma:ST5}
Let $k,l \in \mathbb{N}_0$, $n=k+l+1$. Let $S=[*]^k[0]^{l+1}$ and $T=[0]^k[*]^li$ where $i \neq 0$. With $A_0=S \cup T,$ then we have the following.

For all $0 \leq t \leq k+l-1$,
$$A_t \cap \{x \in \{0,1,\dots,q-1\}^n: \left \lVert x \right \rVert_{k}^i \geq t+1\}=\emptyset.$$

For $1 \leq t \leq k+l$ and $j \notin \{0,i\}$,
$$A_t \cap \{x \in \{0,1,\dots,q-1\}^n: \left \lVert x \right \rVert_{k}^j \geq t\}=\emptyset.$$

\end{lemma}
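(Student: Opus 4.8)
The plan is to package both displayed statements into a single exact upper bound on the infected set, expressed through one potential function. For $x\in\{0,\dots,q-1\}^n$ write $N(x)=\sum_{i=1}^{k+l}\mathbb{I}_{\{x_i\neq 0\}}$ for the number of nonzero entries among the first $n-1=k+l$ coordinates, let $x\in S$ mean $x_n=0$ with $x_{k+1}=\dots=x_{k+l}=0$, let $x\in T$ mean $x_n=i$ with $x_1=\dots=x_k=0$, and define
\[
\rho(x)=
\begin{cases}
0 & \text{if } x\in S\cup T,\\
N(x) & \text{if } x_n\in\{0,i\}\text{ and } x\notin S\cup T,\\
N(x)+1 & \text{if } x_n\notin\{0,i\}.
\end{cases}
\]
Since $A_0=S\cup T=\{x:\rho(x)=0\}$, I would aim to show $A_t\subseteq\{x:\rho(x)\le t\}$ for every $t\ge 0$. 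Granting this, the first assertion follows because $x_n=i$ with $\lVert x\rVert_k^i\ge t+1$ forces a nonzero entry in the first $k$ coordinates, hence $x\notin T$ and $\rho(x)=N(x)=\lVert x\rVert_k^i\ge t+1>t$; the second follows because $x_n=j\notin\{0,i\}$ with $\lVert x\rVert_k^j\ge t$ gives $\rho(x)=N(x)+1\ge t+1>t$. (The stated ranges of $t$ are precisely where these hypotheses are non-vacuous, so they require no separate attention.)

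The whole statement then reduces to one combinatorial claim: \textbf{every vertex $x$ has at most one neighbor $y$ with $\rho(y)\le\rho(x)-2$.} Granting the claim, $A_t\subseteq\{\rho\le t\}$ follows by induction on $t$. The base case is $A_0=\{\rho=0\}$. If $x\in A_{t+1}\setminus A_t$, then $x$ has at least two neighbors in $A_t\subseteq\{\rho\le t\}$; were $\rho(x)\ge t+2$, both of these would satisfy $\rho\le t\le\rho(x)-2$, contradicting the claim, so $\rho(x)\le t+1$.

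To prove the claim I would split on which block the neighbor $y$ differs from $x$ in. If $y$ differs in one of the first $k$ or one of the middle $l$ coordinates, then $x_n=y_n$ and $N$ changes by at most one, so $\rho(y)\ge\rho(x)-1$ \emph{unless} the change toggles membership in $S$ or $T$; this produces a large drop (to $\rho(y)=0$) only when $y$ zeroes out the \emph{unique} nonzero entry of the controlling block (the first $k$ coordinates when $x_n=i$, the middle $l$ coordinates when $x_n=0$), and there is then exactly one such coordinate. If $y$ differs in the last coordinate, then $N(y)=N(x)$ and only the layer changes; writing $s=\sum_{i\le k}\mathbb{I}_{\{x_i\neq 0\}}$ and $m=\sum_{k<i\le k+l}\mathbb{I}_{\{x_i\neq 0\}}$, a direct inspection of the transitions $0\!\leftrightarrow\! i$, $0\!\to\! j$, $i\!\to\! j$, $j\!\to\! j'$ shows that $\rho(y)\le\rho(x)-2$ can occur for at most one target value of $x_n$ (for instance $i\!\to\!0$ when $x_n=i$, $s\ge 2$, $m=0$, or $0\!\to\! i$ when $x_n=0$, $m\ge 2$, $s=0$).

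The main obstacle is to check that the block-zeroing neighbor and the layer-changing neighbor never \emph{both} qualify, so the total count stays at one. This is exactly where the dichotomy between the controlling block being all-zero versus containing a nonzero entry does the work: for $x_n=i$ the block-zeroing neighbor needs $s=1$, which together with $\rho(x)=N\ge 2$ forces $m\ge 1$, so the $i\!\to\!0$ transition lands outside $S$ and leaves $\rho$ unchanged; conversely $m=0$ forces $s\ge 2$, which kills the block-zeroing neighbor and leaves only the $i\!\to\!0$ transition. The $x_n=0$ layer is handled identically with $s$ and $m$ interchanged, and the $x_n=j$ layer has no $S/T$-boundary so only last-coordinate transitions can drop $\rho$ by two. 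This bookkeeping is routine but is the crux. An equivalent but bulkier route is a simultaneous induction directly on the weight classes $\lVert\cdot\rVert_k^i$ and $\lVert\cdot\rVert_k^j$, supplemented by auxiliary classes for the $x_n=0$ layer and for $x_n=j$ with $x_1=\dots=x_k=0$; the single potential $\rho$ repackages precisely these classes and spares one from tracking them separately.
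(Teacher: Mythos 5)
Your proposal is correct, and it takes a genuinely different route from the paper's. The paper proves the two statements by separate inductions on $t$ over the weight classes $\lVert\cdot\rVert_k^j$: in each inductive step the only troublesome neighbor is $z=[0]^kx_{k+1}\cdots x_{n-1}j$, whose tracked weight collapses to $0$ because the first-block indicator vanishes, and the paper excludes $z$ by passing to the block-swapped configuration $S'=[\ast]^l[0]^{k+1}$, $T'=[0]^l[\ast]^ki$ and applying the induction hypothesis to that isomorphic process. Your potential $\rho$ absorbs this symmetry into its definition: it vanishes exactly on $S\cup T$, treats the two blocks symmetrically through the $S/T$ membership tests, and simultaneously tracks the $x_n=0$ layer and the block-zeroed vertices that the paper reaches only through relabeling. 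Your key claim (each vertex has at most one neighbor where $\rho$ drops by at least $2$) checks out: on the layers $x_n\in\{0,i\}$ the block-zeroing neighbor requires the controlling block to have exactly one nonzero entry while the layer-changing drop requires it to have none, and on a layer $x_n=j\notin\{0,i\}$ the two candidate transitions $j\to 0$ and $j\to i$ require $m=0,\,s\ge 1$ and $s=0,\,m\ge 1$ respectively, so in every case the two candidates are mutually exclusive. The induction $A_t\subseteq\{\rho\le t\}$ then runs exactly as you say, with the base case $A_0=\{\rho=0\}$ holding because $N(x)=0$ together with $x_n\in\{0,i\}$ forces $x\in S\cup T$, and both assertions of the lemma follow by reading off $\rho$ on the relevant layers (the stated ranges of $t$ being exactly where the hypotheses are non-vacuous). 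What your route buys is a single unified induction with no appeal to an auxiliary relabeled process, plus slightly stronger conclusions (control of the $x_n=0$ layer and of layer-$j$ vertices whose first block is all zero); what the paper's route buys is that it manipulates only the functionals $\lVert\cdot\rVert_k^j$ that appear in the statement, at the cost of the symmetry detour and two parallel inductions.
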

\begin{proof}
We will first prove that for all $ 0 \leq t \leq k+l-1$,
$$A_t \cap \{x \in \{0,1,\dots,q-1\}^n: \left \lVert x \right \rVert_{k}^j \geq t+1\}=\emptyset,$$
where $j \neq 0$.

Let $t=0$ and consider a vertex $x$ with $\left \lVert x \right \rVert_{k}^j \geq 1$ where $j \neq 0$. It is easy to see that $x \notin A_0$. 
Now let $t \geq 0$ and assume that for $j \neq 0$, 
$$A_{t} \cap \{x \in \{0,1,\dots,q-1\}^n: \left \lVert x \right \rVert_{k}^j \geq t+1\}=\emptyset.$$
Then consider a vertex $x$ with $\left \lVert x \right \rVert_{k}^j \geq t+2$ with $j \neq 0$. It has $q-1$ neighbors $y$ such that $\left \lVert y \right \rVert_{k}^j \leq t$, obtained by changing $x_n$ to one of the other $q-1$ symbols in $\{0,1,\dots,q-1\}$. By the induction hypothesis, among these $q-1$ neighbors, at most one $y=x_1x_2\cdots x_{n-1}0$ may be in $A_{t}$. 
If $\sum_{i=1}^k\mathbb{I}_{\{x_i \neq 0\}} =1$, another neighbor $z$ of $x$ satisfying $\left \lVert z \right \rVert_{k}^j=0$ has not been considered. Indeed, $z=[0]^{k}x_{k+1}\cdots x_{n-1}j$.

We claim that if $z$ exists then $z \notin A_{t}$. If such a $z$ exits, we have $\sum_{i=1}^k\mathbb{I}_{\{x_i \neq 0\}}=1$ and $\sum_{i=k+1}^{k+l}\mathbb{I}_{\{x_i \neq 0\}}\geq t+1$. Now let $z'=(z_{k+1},\dots,z_{k+l},z_1,\cdots,z_k,z_{k+l+1})$. Let $S'=[\ast]^l[0]^{k+1}$ and $T'=[0]^l[\ast]^ki$. Then we have $\left \lVert z' \right \rVert_{l}^j \geq t+1.$ By the induction hypothesis, it follows that $z' \notin A^{'}_t$, where $A^{'}_t$ is the set of infected vertices after $t$ steps by the 2 neighbor process with the initially infected set $S' \cup T'$. By symmetry, $z \notin A_t$. Therefore, $x \notin A_{t+1}$. 

Now we will try to prove for $1 \leq t \leq k+l$ and $j \notin \{0,i\}$,
\begin{align}
A_t \cap \{x \in \{0,1,\dots,q-1\}^n: \left \lVert x \right \rVert_{k}^j \geq t\}=\emptyset. \label{eq:jk>t}
\end{align}
Let $t=1$ and consider a vertex $x$ with $\{0,1,\dots,q-1\}^n: \left \lVert x \right \rVert_{k}^j \geq t\}$ where $j \notin \{0,i\}$. It is clear that this vertex $x$ has at most one neighbor in $A_0$ specifically in $S$. Therefore, $x \notin A_1$. 

Now assume that for $t \geq 1$, (\ref{eq:jk>t}) is satisfied. Now consider a vertex $x$ with $\left \lVert x \right \rVert_{k}^j \geq t+1$. Consider the neighbors $y$ of $x$ satisfying $\left \lVert x \right \rVert_{k}^j \leq t-1$. Specifically, we have $y$ satisfying $y=x_1\cdots x_{n-1}m$ for $m \neq j$. If $m=i$, we have $\left \lVert y \right \rVert_{k}^i \geq t+1$ implying that $y \notin A_t$. If $m=0$, $y$ might be in $A_t$. If $m \notin \{0,i,j\}$, we have $\left \lVert y \right \rVert_{k}^i \geq t+1$, by the induction hypothesis, implying that $y \notin A_t$. 

If $\sum_{i=1}^k\mathbb{I}_{\{x_i \neq 0\}} =1$, another neighbor $z$ of $x$ satisfying $\left \lVert z \right \rVert_{k}^j=0$ has not been considered. Indeed, $z=[0]^{k}x_{k+1}\cdots x_{n-1}j$. By the same argument, $z \notin A_t$. Therefore, $x \notin A_{t+1}$.
\end{proof}

\begin{lemma} \label{lemma:ST6}
Let $k,l \in \mathbb{N}_0$, $n=k+l+2$. Let $S=[*]^k[0]^{l+2}$ and $T=[0]^k[*]^lab$ where $a,b \neq 0$. With $A_0=S \cup T,$ then we have the following. 

For $t \geq 1$,
$$A_t \cap \{x \in \{0,1,\dots,q-1\}^n: \left \lVert x \right \rVert_k^{0b} \geq t\}=\emptyset,$$
and 
$$A_t \cap \{x \in \{0,1,\dots,q-1\}^n: \left \lVert x \right \rVert_k^{a0} \geq t\}=\emptyset.$$

For $j \notin \{b,0\}$ and $t \geq 2$,
$$A_t \cap \{x \in \{0,1,\dots,q-1\}^n: \left \lVert x \right \rVert_k^{0j} \geq t-1\}=\emptyset.$$

For $i \notin \{a,0\}$ and $t \geq 2$,
$$A_t \cap \{x \in \{0,1,\dots,q-1\}^n: \left \lVert x \right \rVert_k^{i0} \geq t-1\}=\emptyset.$$

For $j \neq 0$ and $t \geq 2$,
$$A_t \cap \{x \in \{0,1,\dots,q-1\}^n: \left \lVert x \right \rVert_k^{aj} \geq t-1\}=\emptyset.$$

For $i \neq 0$ and $t \geq 2$,
$$A_t \cap \{x \in \{0,1,\dots,q-1\}^n: \left \lVert x \right \rVert_k^{ib} \geq t-1\}=\emptyset.$$

For $i \notin \{a,0\}$, $j \notin \{b,0\}$, and $t \geq 3$, 
$$A_t \cap \{x \in \{0,1,\dots,q-1\}^n: \left \lVert x \right \rVert_k^{ij} \geq t-2\}=\emptyset.$$

\end{lemma}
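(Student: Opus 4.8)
The plan is to prove all seven displayed statements simultaneously by induction on $t$; they are exactly the upper bounds matching the lower bounds of Lemma~\ref{lemma:ST3}, and the argument follows the template of Lemmas~\ref{lemma:ST4} and~\ref{lemma:ST5}. Throughout I would call the pair $(x_{n-1},x_n)$ the \emph{tag} of a vertex $x$, and write $w(x)=\sum_{i=1}^{n-2}\mathbb{I}_{\{x_i\neq0\}}$ for its \emph{head-weight} and $p(x)=\sum_{i=1}^{k}\mathbb{I}_{\{x_i\neq0\}}$ for its \emph{first-block weight}. Note that each quantity $\lVert x\rVert_k^{\,\cdot}$ appearing in the statement equals $w(x)$ precisely when the tag of $x$ matches the superscript and $p(x)\ge1$, and is $0$ otherwise. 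As in the cited lemmas, for each statement it suffices to assume the full induction hypothesis (all seven statements at all times $\le t$) and to show that a vertex $x$ lying in the forbidden set at time $t+1$ has at most one neighbour in $A_t$; the $2$-neighbour rule then keeps $x$ healthy.

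The heart of the argument is a classification of the neighbours of such an $x$ by the single coordinate in which they differ from $x$, together with a threshold count. The seven thresholds are $t,\,t,\,t-1,\,t-1,\,t-1,\,t-1,\,t-2$ at time $t$ for the tags $(0,b),(a,0),(0,j),(i,0),(a,j),(i,b),(i,j)$ respectively, and each increases by exactly one from time $t$ to $t+1$. A neighbour obtained by changing a head coordinate $x_i$ with $i\le n-2$ preserves the tag and lowers $w$ by at most one, so it still lies at head-weight $\ge$ the time-$t$ threshold of the \emph{same} statement and is excluded by the induction hypothesis — unless it empties the first block, the boundary case treated below. A neighbour obtained by changing $x_{n-1}$ or $x_n$ leaves $w(x)$ and $p(x)$ unchanged but moves to a new tag; every resulting tag other than the $S$-corner $(0,0)$ is again one of the seven listed (in particular $(a,b)$ is the $j=b$ instance of the $(a,j)$ bound and the $i=a$ instance of the $(i,b)$ bound), and one checks that its time-$t$ threshold never exceeds the unchanged head-weight $w(x)$, so these neighbours are excluded as well.

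The one move not covered by the seven statements is the step toward the $S$-corner, i.e. setting the nonzero special last coordinate(s) of $x$ to $0$ so as to reach tag $(0,0)$. Such a neighbour exists only when the tag of $x$ is adjacent to $(0,0)$, namely for the tags $(0,b),(a,0),(0,j),(i,0)$, and there it contributes the single infected neighbour responsible, through $S$, for the spread described in Lemma~\ref{lemma:ST3}; for the tags $(a,j),(i,b),(i,j)$, which are at distance two from $(0,0)$, no such neighbour exists and $x$ has \emph{no} infected neighbour at all. Finally, the boundary neighbour — obtained when $p(x)=1$ by zeroing the unique nonzero entry of the first block, producing a vertex of the same tag with empty first block — is shown healthy by the block-swap symmetry already used in Lemma~\ref{lemma:ST5}: interchanging the first $k$ and the middle $l$ coordinates turns $(S,T)$ into an ST6-configuration with $(k,l)$ swapped and the same $a,b$ and the same tag, in which this neighbour has nonempty first block and is therefore excluded by the same statement at time $t$. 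Combining the three cases, $x$ has at most one neighbour in $A_t$, which is what we need.

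It remains to verify the base cases $t=1$ (for the $(0,b),(a,0)$ statements), $t=2$, and $t=3$ by direct inspection of $A_0$, $A_1$, $A_2$, using that a vertex of any listed tag with $p\ge1$ has at most one neighbour in $S\cup T$. I expect the main obstacle to be organizational rather than conceptual: with seven tags and several families of coordinate moves for each, the bulk of the work is the bookkeeping that (a) every tag-changing move lands in a listed statement whose threshold at time $t$ is met by the unchanged head-weight, and (b) the move toward $(0,0)$ is the \emph{only} uncontrolled neighbour, so that the $2$-neighbour rule cannot fire. Beyond the threshold count and the block-swap symmetry of Lemma~\ref{lemma:ST5}, no new idea should be required.
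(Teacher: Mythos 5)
Your proposal is correct and follows essentially the same route as the paper's proof: a simultaneous induction on $t$ over all seven statements, classification of the neighbours of a forbidden vertex by the coordinate changed (with the tag/threshold bookkeeping you describe), the observation that the only uncontrolled neighbour is the one obtained by moving the last two coordinates toward $(0,0)$, and the block-swap symmetry (as in Lemma~\ref{lemma:ST5}) to rule out the neighbour $[0]^k x_{k+1}\cdots x_n$ when the first block has exactly one nonzero entry. The paper executes exactly this scheme, including the base cases $t=1,2,3$ by direct inspection, so no substantive difference remains to report.
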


\begin{proof}
Let $t=1$ and consider a vertex $x$ with $ \left \lVert x \right \rVert_k^{0b}\geq 1$ where $b \neq 0$. It is easy to see that this vertex $x$ has at most one neighbor in $A_0$ specifically in $S$. Thus $x \notin A_1$. Similarly, the vertex $y$ with $\left \lVert y \right \rVert_k^{a0} \geq 1$ is not in $A_1$. 

Now let $t=2$ and consider a vertex $x$ with $ \left \lVert x \right \rVert_k^{0j}\geq 1$ where $j \notin \{0,b\}$. Note that the vertex $x$ has at most one neighbor in $A_0$ specifically in $S$. Thus $x \notin A_1$. Similarly, the vertex $y$ with $\left \lVert y \right \rVert_k^{i0} \geq 1$, where $i \notin \{a,0\}$, is not in $A_1$. 

Again let $t=2$ and consider a vertex $x$ with $ \left \lVert x \right \rVert_k^{aj}\geq 1$ where $j \neq 0$. Note that the vertex $x$ has no neighbor in $A_0$ unless $\sum_{i=1}^k\mathbb{I}_{\{x_i >0\}}=1$ and $x_{n-1}=a $ and $x_n=b$. It is easy to see that even for a such $x$ it has just one neighbor in $A_0$. Thus $x \notin A_1$. Similarly, the vertex $y$ with $\left \lVert y \right \rVert_k^{ib} \geq 1$ where $i \neq 0$, is not in $A_1$. 

Now let $t=3$ and consider a vertex $x$ with $ \left \lVert x \right \rVert_k^{ij}\geq 1$ where $i \neq a,0$ and $j \notin \{0,b\}$. Note that the vertex $x$ has no neighbor in $A_0$. Thus $x \notin A_1$.

The neighbors $y$ of a vertex $x$ with $\left \lVert x \right \rVert_k^{0j} \geq 1$ satisfying $\left \lVert y \right \rVert_k^{0j}=0$ will be considered since the other neighbors of $x$ will not be in $A_1$. Specifically, we only need to analyze $y=x_1\cdots x_{n-2}00$, $y=x_1\cdots x_{n-2}0i$ with $i \notin \{0,j\}$, $y=x_1\cdots x_{n-2}ij$ with $i \neq 0$ and $y=[0]^kx_{k+1}\cdots x_{n-2}0j$ if $\sum_{i=1}^k\mathbb{I}_{\{x_i \neq 0\}}=1$. The only neighbors $y$ of $x$ which are possibly in $A_1$ are the ones satisfying $y=x_1\cdots x_{n-2}00$ and $y=[0]^kx_{k+1}\cdots x_{n-2}0j$. However, it is clear that the vertex $y$ with $y=[0]^kx_{k+1}\cdots x_{n-2}0j$ is not in $A_1$ since it has at most one neighbor in $A_0$ specifically in $S$. Therefore, the vertex $x \notin A_2$. Similarly, the vertex $x$ with $\left \lVert x \right \rVert_k^{i0} \geq 1$ where $i \notin \{a,0\}$, is not in $A_2$. 

Now we will show that the vertex $x$ with $\left \lVert x \right \rVert_k^{aj} \geq 1$ where $j \neq 0$, is not in $A_2$. Again we only need to consider the neighbors $y$ of a vertex $x$ with $\left \lVert x \right \rVert_k^{aj} \geq 1$ satisfying $\left \lVert y \right \rVert_k^{aj}=0$. Specifically, $y=x_1\cdots x_{n-2}ai$ with $i \neq j$, $y=x_1\cdots x_{n-2}ij$ with $i \neq a$ and $y=[0]^kx_{k+1}\cdots x_{n-2}aj$ if $\sum_{i=1}^k\mathbb{I}_{\{x_i \neq 0\}}=1$. The only neighbor $y$ of $x$ possibly in $A_1$ is the one satisfying $y=[0]^kx_{k+1}\cdots x_{n-2}aj$.Therefore, the vertex $x \notin A_2$. Similarly, the vertex $x$ with $\left \lVert x \right \rVert_k^{ib} \geq 1$ where $i \neq 0$, is not in $A_2$. 

Now let us turn to proving that the vertex $x$ with $\left \lVert x \right \rVert_k^{ij} \geq 1$, where $i \notin \{0,a\}$ and $j \notin \{0,b\}$, is not in $A_3$. Note that $x$ does not have any neighbor in $A_0$ so that $x \notin A_1$. Let us first show that $x \notin A_2$. The only neighbors $y$ of $x$ possibly in $A_1$ satisfy $\left \lVert y \right \rVert_k^{ij}=0$. Specifically, $y=x_1\cdots x_{n-2}aj$ with $a \neq i$, $y=x_1\cdots x_{n-2}ia$ with $a \neq j$ and $y=[0]^kx_{k+1}\cdots x_{n-2}ij$ if $\sum_{i=1}^k\mathbb{I}_{\{x_i \neq 0\}}=1$. The only neighbor $y$ of $x$ possibly in $A_1$ is the one satisfying $y=[0]^kx_{k+1}\cdots x_{n-2}ij$. Therefore, $x \notin A_2$. Note that the vertex $y$ with $y=[0]^kx_{k+1}\cdots x_{n-2}ij$ is not in $A_1$ since it does not have any neighbor in $A_0$. Very similarly, the only neighbors $y$ of $x$ possibly in $A_2$ satisfy $\left \lVert y \right \rVert_k^{ij}=0$. Specifically, $y=x_1\cdots x_{n-2}aj$ with $a \neq i$, $y=x_1\cdots x_{n-2}ia$ with $a \neq j$ and $y=[0]^kx_{k+1}\cdots x_{n-2}ij$ if $\sum_{i=1}^k\mathbb{I}_{\{x_i \neq 0 \}}=1$. The only neighbor $y$ of $x$ possibly in $A_2$ is the one satisfying $y=[0]^kx_{k+1}\cdots x_{n-2}ij$. Therefore, $x \notin A_3$.

Now we will move on to the inductive step. Now assume we have the following. 

For $t \geq 1$,
$$A_t \cap \{x \in \{0,1,\dots,q-1\}^n: \left \lVert x \right \rVert_k^{0b} \geq t\}=\emptyset,$$ 
and 
$$A_t \cap \{x \in \{0,1,\dots,q-1\}^n: \left \lVert x \right \rVert_k^{a0} \geq t\}=\emptyset.$$ 

For $j \notin \{b,0\}$ and $t \geq 1$,
$$A_{t+1} \cap \{x \in \{0,1,\dots,q-1\}^n: \left \lVert x \right \rVert_k^{0j} \geq t\}=\emptyset.$$

For $i \notin \{a,0\}$ and $t \geq 1$,
$$A_{t+1} \cap \{x \in \{0,1,\dots,q-1\}^n: \left \lVert x \right \rVert_k^{i0} \geq t\}=\emptyset.$$

For $j \neq 0$ and $t \geq 1$,
$$A_{t+1} \cap \{x \in \{0,1,\dots,q-1\}^n: \left \lVert x \right \rVert_k^{aj} \geq t\}=\emptyset.$$

For $i \neq 0$ and $t \geq 1$,
$$A_{t+1} \cap \{x \in \{0,1,\dots,q-1\}^n: \left \lVert x \right \rVert_k^{ib} \geq t\}=\emptyset.$$

For $i \notin \{a,0\}$, $j \notin \{b,0\}$, and $t\geq 1$,
$$A_{t+2} \cap \{x \in \{0,1,\dots,q-1\}^n: \left \lVert x \right \rVert_k^{ij} \geq t\}=\emptyset.$$

Let us consider a vertex $x$ satisfying $\left \lVert x \right \rVert_k^{0b} \geq t+1$. The neighbors $y$ of the vertex $x$ must satisfy 
$\left \lVert y \right \rVert_k^{0b} \geq t$, $\left \lVert y \right \rVert_k^{0j} \geq t+1$ for  $j \notin \{b,0\}$, $\left \lVert y \right \rVert_k^{00} \geq t+1$, $\left \lVert y \right \rVert_k^{ib} \geq t+1$ where $i \neq 0$ or $y=[0]^kx_{k+1}\cdots x_{n}$. From the induction hypothesis only two neighbors $y$ and $z$ of $x$ satisfying $\left \lVert y \right \rVert_k^{00} \geq t+1$ and $z=[0]^kx_{k+1}\cdots x_{n}$ can possibly be in $A_t$. However we claim that $z \notin A_t$.

Now let us prove the claim. In order for a such $z$ to exist, we must have 
$\sum_{i=1}^k\mathbb{I}_{\{x_i \neq 0\}}=1$ and $\sum_{i=k+1}^{k+l}\mathbb{I}_{\{x_i \neq 0\}}\geq t$. Let $z'=z_{k+1} \cdots z_{k+l}z_1\cdots z_k z_{k+l+1}z_{k+l+2}$ and then $\left \lVert z' \right \rVert_l^{0b} \geq t+1.$ Now let $S'=[\ast]^l[0]^{k+2}$ and $T'=[0]^l[\ast]^kab$. Then by the induction hypothesis, we have $z' \notin A_t^{'}$, where $A^{'}_t$ is the set of infected vertices with the initial infected set $S' \cup T'$. Thus $z \notin A_t$. Now we can conclude $x \notin A_{t+1}$. Similarly, the vertex $x$ satisfying $\left \lVert x \right \rVert_k^{a0} \geq t+1$ is not in $A_{t+1}$. 

Now consider a vertex $x$ satisfying $\left \lVert x \right \rVert_k^{0j} \geq t+1$ where $j \notin \{b,0\}$. The neighbors $y$ of the vertex $x$ must satisfy 
$\left \lVert y \right \rVert_k^{0i} \geq t+1$ for all $i \neq j$, $\left \lVert y \right \rVert_k^{0j} \geq t$, $\left \lVert y \right \rVert_k^{ij} \geq t+1$ for all $i \neq 0$, or $y=[0]^kx_{k+1}\cdots x_{n}$. By the induction hypothesis and what we have already proved, only two neighbors $y$ and $z$ of $x$ satisfying $\left \lVert y \right \rVert_k^{00} \geq t+1$ and $z=[0]^kx_{k+1}\cdots x_{n}$ can possibly be in $A_{t+1}$. However by using a very similar argument, we can show that that $z \notin A_{t+1}$. Therefore, $x \notin A_{t+2}$. In a very similar way, we can prove that $x \notin A_{t+2}$ if $x$ satisfies $\left \lVert x \right \rVert_k^{i0} \geq t+1$ for $i \notin \{a,0\}$,
$\left \lVert x \right \rVert_k^{aj} \geq t+1$ for $j \neq 0$, or $\left \lVert x \right \rVert_k^{ib} \geq t+1$ for $i \neq 0$.

Let us consider a vertex $x$ satisfying $\left \lVert x \right \rVert_k^{ij} \geq t+1$ where $i \notin \{a,0\}$ and $j \notin \{b,0\}$. We would like to show that $ x \notin A_{t+3}$. The neighbors $y$ of the vertex $x$ must satisfy $\left \lVert y \right \rVert_k^{ij} \geq t$, $\left \lVert y \right \rVert_k^{mj} \geq t+1$ where $m \neq i$, $\left \lVert y \right \rVert_k^{im} \geq t+1$ where $m \neq j$ or $y=[0]^kx_{k+1}\cdots x_{n}$. From the induction hypothesis and what we have already proved, the neighbor $y$ of $x$ satisfying $y=[0]^kx_{k+1}\cdots x_{n}$ is the only  possible vertex in $A_{t+2}$. Therefore, $x \notin A_{t+3}$. 
\end{proof}

Now let us summarize Lemmas (\ref{lemma:ST1})--(\ref{lemma:ST6}) in the following lemma. 
\begin{lemma} \label{lemma:summary}
For $x,y \in \{0,1,\cdots,q-1\}^n$ such that dim($x$)=$k$, dim($y$)=l, dim($x \vee y$)=$m$, where $k,l <m \leq n$, $d(x,y)=d \leq 2$, and $|\{i: x_i=y_i=* \}|=p$, then the infection time of $Q^x \cup Q^y$ in $Q_{n,q}$ is:

For $d=0$, $$T_{Q_{n,q}}(Q^x \cup Q^y)=m-p-1.$$

For $d=1$, $$T_{Q_{n,q}}(Q^x \cup Q^y)=m-p.$$

For $d=2$, $$T_{Q_{n,q}}(Q^x \cup Q^y)=m-p+1.$$
\end{lemma}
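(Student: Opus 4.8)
The plan is to reduce an arbitrary configuration to the three canonical configurations treated in Lemmas~\ref{lemma:ST1}--\ref{lemma:ST6}, and then read off the exact time by pairing each ``growth'' lemma (\ref{lemma:ST1}, \ref{lemma:ST2}, \ref{lemma:ST3}), which lower-bounds $A_t$ and hence upper-bounds the time, with the matching ``obstruction'' lemma (\ref{lemma:ST4}, \ref{lemma:ST5}, \ref{lemma:ST6}), which upper-bounds $A_t$ and hence lower-bounds the time. First I would record that $T_{Q_{n,q}}(\cdot)$ is invariant under the automorphisms of $Q_{n,q}$ generated by permuting coordinates and by independently permuting the alphabet $\{0,\dots,q-1\}$ within each coordinate; this lets me normalise $x$ and $y$ once the reductions are done.

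Next I would carry out two reductions that remove $p$ and the ``agreeing'' coordinates. Classify each coordinate $i$ into one of four types: $x_i=y_i=*$ (there are $p$ of these); exactly one of $x_i,y_i$ equals $*$; $x_i=y_i\neq *$; and $x_i\neq y_i$ with both $\neq *$ (there are $d$ of these). A coordinate of the third type is frozen throughout the process, so by Lemma~\ref{lemma:Q^x Q^y} the closure $\langle Q^x\cup Q^y\rangle=Q^{x\vee y}$ never moves in it; restricting to $Q^{x\vee y}$ discards these coordinates without changing the time, leaving an ambient cube of dimension $m$ in which $x\vee y=[*]^m$. For the $p$ shared-$*$ coordinates I would use a product argument: writing the cube as $Q_{p,q}\times Q_{m-p,q}$ with $A_0=Q_{p,q}\times W_0$, one shows by induction on $t$ that $(u,v)$ is infected at step $t$ if and only if $v$ is infected at step $t$ in the $2$-neighbour process on $Q_{m-p,q}$ started from $W_0$; the key point is that whenever $v$ is not yet infected none of the $Q_{p,q}$-direction neighbours of $(u,v)$ are infected either, so every infecting neighbour lies in the $Q_{m-p,q}$ factor. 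Hence the time is unchanged, and the reduced instance has no shared stars, dimensions $k-p$ and $l-p$, a join of full dimension $m-p$, and the same $d$. Permuting coordinates and relabelling symbols coordinate-wise then puts $x,y$ into the forms $S=[*]^{k-p}[0]^{\,l-p+d}$ and $T=[0]^{k-p}[*]^{\,l-p}w$, with $w\in\{1,\dots,q-1\}^d$, exactly as required by Lemmas~\ref{lemma:ST1}--\ref{lemma:ST6}.

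With the instance in canonical form (ambient dimension $N:=m-p$), the exact time follows by matching the two families. For $d=0$, Lemma~\ref{lemma:ST1} gives $A_{N-1}\supseteq\{x:\lVert x\rVert\le N\}$, i.e.\ the whole cube, while Lemma~\ref{lemma:ST4} exhibits the all-ones vertex (which has both block-indicators equal to $1$, so $\lVert x\rVert_{k-p,\,l-p}=N$) as uninfected at step $N-2$; hence $T=N-1=m-p-1$. For $d=1$ the last vertices to ignite are those ending in a symbol $j\notin\{0,i\}$ with all other coordinates nonzero: Lemma~\ref{lemma:ST2} infects them precisely at step $N$, and Lemma~\ref{lemma:ST5} shows the witness $[1]^{N-1}j$ is still healthy at step $N-1$, giving $T=N=m-p$. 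The case $d=2$ is identical, with line~\eqref{eq:cd} of Lemma~\ref{lemma:ST3} and Lemma~\ref{lemma:ST6} pinning the time of the vertices ending in $cd$ with $c\neq 0,a$ and $d\neq 0,b$ to $N+1=m-p+1$. Note that the existence of the witnessing symbols $j$ (resp.\ $c,d$) distinct from the finitely many forbidden values is exactly where the hypothesis $q\geq 3$ enters.

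The part requiring the most care is the product reduction for the shared-$*$ coordinates together with the claim that the two families meet at \emph{consecutive} steps. The inductive equivalence between the product process and the $Q_{m-p,q}$ process must be checked so that no ``mixed'' infection, using one neighbour in each factor, is overlooked. One must also verify that the extremal vertices identified by \eqref{eq:cd} and Lemma~\ref{lemma:ST6} are genuinely the last to be infected, so that the upper bound from the growth lemma and the lower bound from the obstruction lemma coincide rather than leaving a gap of one step. The degenerate cases $k-p=0$ or $l-p=0$ (still allowed by $k,l<m$) should be treated separately, but they are covered because Lemmas~\ref{lemma:ST1}--\ref{lemma:ST6} are stated for all $k,l\in\mathbb{N}_0$.
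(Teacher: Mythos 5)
Your proposal takes the same route as the paper's proof, which is precisely the pairing you describe: Lemma~\ref{lemma:ST1} with Lemma~\ref{lemma:ST4} for $d=0$, Lemma~\ref{lemma:ST2} with Lemma~\ref{lemma:ST5} for $d=1$, and Lemma~\ref{lemma:ST3} with Lemma~\ref{lemma:ST6} for $d=2$, combined through the identity $m=k+l-p+d$. Your write-up is in fact more complete than the paper's: the paper never spells out the normalization to the canonical pairs $(S,T)$ nor the product argument eliminating the $p$ shared-star coordinates, both of which are needed and both of which you carry out correctly.

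The one genuine flaw is your final claim that the degenerate cases are ``covered because Lemmas~\ref{lemma:ST1}--\ref{lemma:ST6} are stated for all $k,l\in\mathbb{N}_0$.'' When $d\geq 1$ the hypotheses of the lemma do allow $k-p=0$ (for example $x=*00$, $y=*1*$ in $Q_{3,q}$, where $k=p=1$, $l=2$, $d=1$, $m=3$), and in that case the obstruction Lemmas~\ref{lemma:ST5} and~\ref{lemma:ST6} are not merely applicable with first block of size zero: they are \emph{vacuous}, because every norm $\lVert \cdot \rVert_{k}^{\cdots}$ carries the factor $\mathbb{I}_{\{\sum_{i=1}^{k}x_i>0\}}$, which is identically zero when $k=0$. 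Consequently your witnesses $[1]^{N-1}j$ and $[1]^{N-2}cd$ have norm $0$ rather than $N-1$ or $N-2$, and the lower bound does not follow as you state it. The repair is easy: if $k-p=0$ but $l-p\geq 1$, swap the roles of $x$ and $y$ (after relabelling symbols in the $d$ disagreeing coordinates so that the entries of the larger cube become $0$), which makes the nonempty star-block the one that the obstruction norms gate on; if $k-p=l-p=0$, the reduced configuration is a pair of vertices at distance $d$ in $Q_{d,q}$, and the values $T=1$ (for $d=1$) and $T=3$ (for $d=2$) are checked directly, using $q\geq 3$. With this amendment your proof is complete; for what it is worth, the paper's own one-paragraph proof glosses over exactly the same point.
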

\begin{proof}
Note that we have $m=p+(k-p)+(l-p)+d=k+l-p+d$. 

Let us consider the case where $d=0$. By Lemma \ref{lemma:ST1}, we have 
$T_{Q_{m,q}}(Q^x \cup Q^y) \leq k-p+l-p-1=m-p-1$. By Lemma \ref{lemma:ST4}, we have $k-p+l-p-1=m-p-1$.

Now let us consider the case where $d=1$. By Lemma \ref{lemma:ST2}, we have 
$T_{Q_{m,q}}(Q^x \cup Q^y) \leq k-p+l-p+1=m-p$. By Lemma \ref{lemma:ST5}, we have $k-p+l-p+1=m-p$.

Finally let us consider the case where $d=2$. By Lemma \ref{lemma:ST3}, we have $T_{Q_{m,q}}(Q^x \cup Q^y) \leq k-p+l-p+3=m-p+1$. By Lemma \ref{lemma:ST6}, we have $k-p+l-p+3=m-p+1$.
\end{proof}

We are now in a position to prove the main theorem of this section, where we establish a recursive formula for the maximum percolation time, leading to its closed-form expression.

\begin{theorem}
For $q \geq 3$, $M_q(0)=0$ $M_q(1)=1$, and $M_q(2)=3$. Moreover,  for all $n \geq 3$
$$M_3(n)-M_3(n-3)=2n-1,$$
and for $q \geq 4$
$$M_q(n)-M_q(n-3)=2n.$$
\end{theorem}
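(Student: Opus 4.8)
The plan is to verify the three base cases directly and then prove each recursion by matching a lower bound (an explicit construction) against an upper bound, with Lemmas~\ref{lemma:ST1}--\ref{lemma:ST6} (packaged as Lemma~\ref{lemma:summary}) as the main engine.

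\textbf{Base cases.} The cube $Q_{0,q}$ is a single already-infected vertex, so $M_q(0)=0$. The graph $Q_{1,q}$ is the complete graph $K_q$, so from any two infected vertices every remaining vertex has two infected neighbours and is infected after one step, giving $M_q(1)=1$. For $n=2$, the set $A_0=\{[0]^2,ab\}$ with $a,b\neq 0$ spans $Q_{2,q}$ at step $3$ by Lemma~\ref{lemma:ST3} (taking $k=l=0$) and no sooner by Lemma~\ref{lemma:ST6}, so $M_q(2)\ge 3$; conversely, Lemma~\ref{lemma:nested sequence} exhibits any spanning set as a short chain of merges and Lemma~\ref{lemma:summary} bounds each merge, a single distance-$2$ merge costing at most $m-p+1=3$, whence $M_q(2)\le 3$.

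\textbf{Lower bounds for the recursion.} To show $M_q(n)\ge M_q(n-3)+2n$ (and $M_3(n)\ge M_3(n-3)+2n-1$) I would take an extremal set $A_0'$ on $Q_{n-3,q}$, with $\langle A_0'\rangle=Q_{n-3,q}$ and $T(A_0')=M_q(n-3)$, embed it in the subcube $[0]^3[*]^{n-3}$ of $Q_{n,q}$, and adjoin a bounded number of structured vertices in the three new coordinates, of exactly the $S\cup T$ form appearing in Lemmas~\ref{lemma:ST1}--\ref{lemma:ST6}. The spreading Lemmas~\ref{lemma:ST1}--\ref{lemma:ST3} then force the embedded $Q_{n-3,q}$ to be internally spanned in $M_q(n-3)$ steps and the infection to creep across the three added directions, while the blocking Lemmas~\ref{lemma:ST4}--\ref{lemma:ST6} certify that the last vertices appear exactly $2n$ (resp.\ $2n-1$) steps later. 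The dichotomy between $q=3$ and $q\ge 4$ surfaces precisely here, through the ``$c\neq 0,a$, $d\neq 0,b$'' region of Lemmas~\ref{lemma:ST3} and~\ref{lemma:ST6}, which is nonempty only when a third nonzero symbol exists and so shifts the achievable delay by one.

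\textbf{Upper bounds and the main obstacle.} For the reverse inequalities I would fix $A_0$ with $\langle A_0\rangle=Q_{n,q}$ and $T(A_0)=M_q(n)$, apply Lemma~\ref{lemma:nested sequence} to obtain a nested chain $Q_{i_1}\subset\cdots\subset Q_{i_t}=Q_{n,q}$ in which each $Q_{i_j}$ is spanned by $Q_{i_{j-1}}$ and an auxiliary cube $Q_{m_{j-1}}$, $m_{j-1}\le i_{j-1}$, whose merge cost is controlled by Lemma~\ref{lemma:summary}, and then induct on $n$, using the monotonicity lemma $M_q(n-1)\le M_q(n)$ and Lemma~\ref{lemma: internally spanned} to pin down the dimension of the internally spanned subcube just below the top and to show that resolving the final three coordinates costs at most $2n$ (resp.\ $2n-1$). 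The crux is exactly this upper bound, because percolation time is \emph{not} additive along the nested chain: consecutive merges overlap in time, so naively summing the per-merge costs of Lemma~\ref{lemma:summary} overcounts. For instance a chain of distance-$1$ merges reaching dimensions $1,2,3,4$ has per-merge costs summing to $1+2+3+4=10$, yet its true time, like that of every configuration on $Q_{4,q}$, is at most $M_q(4)=9$. The hard part is therefore to set up an induction that captures this genuinely sub-additive overlap and proves that clearing three new coordinates costs exactly $2n$ (resp.\ $2n-1$), matching the construction; I expect this accounting, together with tracking where the third nonzero symbol is needed, rather than any single invocation of Lemmas~\ref{lemma:ST1}--\ref{lemma:ST6}, to be the principal difficulty.
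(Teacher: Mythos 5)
Your overall architecture (base cases, an explicit construction for the lower bound, the nested sequence plus Lemma~\ref{lemma:summary} for the upper bound) matches the paper's, but both halves of the recursion have genuine gaps. For the lower bound, adjoining seeds ``of exactly the $S\cup T$ form of Lemmas~\ref{lemma:ST1}--\ref{lemma:ST6}'' cannot produce a delay of $2n$: by Lemma~\ref{lemma:summary} a single such merge costs at most $m-p+1\le n+1$ extra steps, so this construction tops out at $M_q(n-3)+n+1$. The paper's construction is two-stage: it adjoins \emph{two} seeds to the extremal set spanning $[\ast]^{n-3}000$, namely $[0]^{n-3}110$ (at distance $2$) and the far vertex $[2]^n$; the first merge spans $[\ast]^{n-1}0$ in about $n$ steps, and only afterwards does $[2]^n$ drive a second merge costing about $n$ more. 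Certifying the timing of this second stage is \emph{not} a consequence of the blocking Lemmas~\ref{lemma:ST4}--\ref{lemma:ST6}, which treat exactly one $S\cup T$ configuration; the paper needs a separate induction, Claim~\ref{claim:subset I}, to show that $[2]^n$ stays isolated for $n-1$ steps and that the second stage then takes $n$ (resp.\ $n+1$) further steps. Relatedly, your explanation of the $q=3$ versus $q\ge 4$ dichotomy is wrong: the region $c\neq 0,a$, $d\neq 0,b$ of Lemmas~\ref{lemma:ST3} and~\ref{lemma:ST6} is nonempty already for $q=3$ (take $a=b=1$, then $c=d=2$), since $c$ and $d$ each avoid only two symbols. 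The dichotomy arises only in the two-stage construction, where \emph{three} symbols are excluded at coordinates $n-2,n-1$ ($0$ from the subcube, $1$ from the seed $[0]^{n-3}110$, and $2$ from $[2]^n$): the slowest vertices of Condition($k$) in Claim~\ref{claim:subset I} exist only when $q\ge 4$, and no single application of Lemma~\ref{lemma:ST3} or~\ref{lemma:ST6} can see this.

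For the upper bound you correctly observe that percolation times are not additive along the nested chain, but you then leave the resolution as ``the principal difficulty,'' i.e.\ the bound is not actually proved. The paper's resolution is concrete and avoids summing the chain: internally spanned cubes percolate in parallel, so everything up to and including $Q_{i_{t-2}}^{x_{i_{t-2}}}$ and $Q_{m_{t-2}}^{z_{m_{t-2}}}$ is fully infected by time at most $M_q(n-3)$ (or $M_q(n-2)$, by the induction built into the definition of $M_q$ together with its monotonicity), and only the last one or two merges are charged on top of that, through a five-case analysis on $i_{t-1}$, $i_{t-2}$, $d(x_{i_{t-2}},z_{m_{t-2}})$, and $m_{t-2}$, each case closed by Lemma~\ref{lemma:summary}. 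Moreover, for $q=3$ the generic bound $M_q(n-3)+2n$ is too weak, and Cases 4 and 5 of the paper require an additional hand-run analysis (tracking a vertex $y\in A$ lying outside $[\ast]^{n-1}0$) to shave one step and reach $M_3(n-3)+2n-1$; finally, an arithmetic claim is needed to show that the $M_q(n-3)$ branch of the resulting maximum dominates the $M_q(n-2)+n+1$ branch, which is what turns the case bounds into the stated recursion. None of these steps appears in your proposal, so as written it establishes neither inequality of the recursion.
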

\begin{proof}
It is obvious that $M_{q}(0)=0$ and $M_q(1)=1$. Now consider the case where $n=2$. W.l.o.g we can assume $00 \in A_0$ and first consider $A_0=\{00,ab\}$ where $a, b\neq 0$. It takes 3 steps to infect every vertex in $Q_q(2)$. Another initially infected set that we need to consider is $A_{0}=\{00,a0,0b\}$ where $a,b \neq 0$ and again it takes 3 steps to percolate. These are the only sets of initially infected vertices that we need to consider since adding any other vertex to these sets will not increase the percolation time and the process can not be initiated if any vertex is removed from these sets. 

We will first prove the lower bound of $M_{q}(n)$. 

Case 1. Let $A^{n-2}$ be a set that internally spans $Q^x_{n-2}$ for $x=[\ast]^{n-2}00$ in time $M_q(n-2)$ such that $[0]^n$ is infected at time $M_q(n-2)$. Now let $A'=A^{n-2} \cup [0]^{n-2}11$ and assume that we initially infect every vertex in $A'$. Note that $\langle A' \rangle=Q_{n,q}$ and before the time $M_q(n-2)$, none of the vertex in $[0]^{n-2}\ast \ast$ has interaction with the infection process in $[\ast]^{n-2}00$. 

Now assume that every vertex in $[\ast]^{n-2}00$ has been infected. Then it takes exactly $n+1$ additional steps to infect the rest of the vertices in $Q_{n,q}$. Indeed, by Lemma \ref{lemma:summary}, we have $d([\ast]^{n-2}00, [0]^{n-2}11)=2$, $|\{i:x_i=y_i=\ast\}|=0$ where $x=[\ast]^{n-2}00$ and $y=[0]^{n-2}11$, and $\langle [\ast]^{n-2}00, [0]^{n-2}11)=2\rangle= Q_{n,q}$. Therefore, 
$$T_{Q_{n,q}}(A')=M_q(n-2)+n+1.$$

Case 2. Let $A^{n-3}$ be a set that internally spans $Q_{n-3}^x$ for $x=[\ast]^{n-3}000$ in time $M_q(n-3)$ such that $[0]^n$ is infected at time $M_q(n-3)$. Let $A'=[\ast]^{n-3}000 \cup [0]^{n-3}110 \cup [2]^n$ and assume we initially infect every vertex in $A'$. Note that 
$\langle A' \rangle=Q_{n,q}$ and the set of sites infected after $M_q(n-3)$ steps is $[\ast]^{n-3}000 \cup [0]^{n-3}110 \cup [2]^n$.

After $M_q(n-3)+n-1$ steps, we claim that $[2]^n$ is the only vertex infected in $\cup_{j \neq 0} ([\ast]^{n-1}j)$. Indeed, the vertices $y$ in $[\ast]^{n-1}0$ that are at distance 2 from $[2]^n$ satisfy for $i \neq 2$ 
\begin{align} \label{eq:j20}
\left \lVert y \right \rVert^{i20}= n-3,  
\end{align} 

for $j \neq 2$
\begin{align}\label{eq:2j0}
    \left \lVert y \right \rVert^{2j0}= n-3,
\end{align}

 \begin{align} \label{eq:220}
\left \lVert y \right \rVert^{220}= n-4 
 \end{align}

or 
\begin{align} \label{eq:220'}
    \left \lVert y \right \rVert^{220}=n-3.
\end{align}

By Lemma \ref{lemma:ST3} and Lemma \ref{lemma:ST6}, none of those vertices $y$ satisfying (\ref{eq:j20})--(\ref{eq:220'}) is infected at time $ \leq M_q(n-3)+n-2$. In fact some of those vertices $y$ are infected exactly at time $M_q(n-3)+n-1$ and the rest of those vertices will be infected at time $M_q(n-3)+n$. Therefore, at time $M_q(n-3)+n-1$, $[2]^n$ is the only vertex infected in $\cup_{j \neq 0} ([\ast]^{n-1}j)$. Now we assume that every vertex in $[\ast]^{n-1}0 \cup [2]^n$ has been infected, then by Lemma \ref{lemma:summary}, it takes exactly $n$ steps to infected the entire $Q_{n,q}$ since $d([\ast]^{n-1}0,[2]^n)=1$, $|\{i:x_i =y_i=\ast\}|=0$ where $x=[\ast]^{n-1}0$ and $y=[2]^n$, and $\langle[\ast]^{n-1}0 \cup [2]^n \rangle=Q_{n,q}$.

Therefore, we have 
$$T_{Q_{n,q}}(A') \geq M_q(n-3)+2n-1.$$

Additionally we would like to show that for $q \geq 4$,
\begin{align}\label{eq:T(Qn)}
    T_{Q_{n,q}}(A') \geq M_q(n-3)+2n.
\end{align} 

In order to prove this tighter bound we need to further analyze the process right after $M_q(n-3)$ steps.

Now we need to show the following claim for $q \geq 4$. 
\begin{claim} \label{claim:subset I}
    For $ 1 \leq k \leq n-1$, every $k$ dimensional cube in $[\ast]^{n-1}2$ will be fully infected at time $M_q(n-3)+n+k$ and at time $\leq M_q(n-3)+n+k-1$ none of the $k$ dimensional cubes in $[\ast]^{n-1}2$ has been fully infected. In particular, the vertices $x$ with $x_n=2$ satisfying the following condition, denoted as Condition($k$),are not infected at time  $\leq M_q(n-3)+n+k-1$:
    
     there exists a subset $I \subset [n-1]$ of size at least $k$ such that 
\begin{align*}
x_i &=\begin{cases} 
  \notin \{0,2\}& \text{if $i \in I$ and $i \notin \{n-2,n-1\}$} \\
   \notin \{0,1,2\} & \text{if $i \in I$ and $ i \in \{n-2,n-1\}$} \\
   \in \{0,2\}& \text{if $i \notin I$ and $i \notin \{n-2,n-1\}$}\\
   \in \{0,1,2\} & \text{if $i \notin I$ and $i \in \{n-2,n-1\}$}
   \end{cases}
\end{align*}

    For $l \neq 0,2$ and  for $ 1 \leq k \leq n-1$, none of the $k$ dimensional cubes in $[\ast]^{n-1}l$ has been fully infected at time $ \leq M_q(n-3)+n+k$. In particular, the vertices $x$ with $x_n=l$ satisfying Condition($k$) are not infected at time $ \leq M_q(n-3)+n+k$.
    
\end{claim} 
\begin{proof}
Let $k=1$. Note that by Lemma \ref{lemma:ST3} and Lemma \ref{lemma:ST6}, after $M_q(n-3)+n-1$ steps every vertex $x$ in $[\ast]^{n-1}0$ has been infected except for the vertices $x$ satisfying 
$$\left \lVert x \right \rVert^{ij0}= n-3$$
where $i,j \notin \{0,1\}$
and after one additional step, the cube $[\ast]^{n-1}0$ has been fully infected. 

Again $[2]^n$ is the only vertex infected in $\cup_{j \neq 0} ([\ast]^{n-1}j)$ at time $M_q(n-3)+n-1$ and after one more step there are no infected vertices in $\cup_{j \notin \{0,2\}} ([\ast]^{n-1}j)$ besides $[2]^n$. The following vertices are infected in $[\ast]^{n-1}2$ at time $M_{q}(n-3)+n$:
  \begin{gather*}
 [2]^{n-3}122 \quad [2]^{n-3}022\\
[2]^{n-3}202 \quad [2]^{n-3}212
   \end{gather*}
and 
\begin{gather*}
0[2]^{n-1}\\
202^{n-2}\\
2202^{n-3}\\
...\\
[2]^{n-4}0[2]^3.
\end{gather*}

Indeed the vertex $[2]^{n-3}122$ has two neighbors infected at time $ \leq M_q(n-3)+n-1$, namely $[2]^n$ and $[2]^{n-3}120$ and it is similar for other vertices listed above.

At time $M_q(n-3)+n+1$, every $1$ dimensional cube in $[\ast]^{n-1}2$ is fully infected since the vertices in a $1$ dimensional cube are either infected at time $M_q(n-3)+n$ or having at least $2$ neighbors infected at time $M_q(n-3)+n+1$. 

Now consider the vertices in $\cup_{j \notin \{0,2\}} ([\ast]^{n-1}j)$. At time $M_q(n-3)+n+1$, the following vertices are infected:
$$\cup_{j \notin \{0,2\}}[2]^{n-1}j$$
since the vertex $[2]^{n-1}j$ has two neighbors infected at time $\leq M(n-3)+n$ namely $[2]^n$ and $[2]^{n-1}0$. Additionally, at time $M_q(n-3)+n+1$, for all $l \notin \{0,2\}$
  \begin{gather*}
 [2]^{n-3}12l \quad [2]^{n-3}02l\\
[2]^{n-3}20l \quad [2]^{n-3}21l
   \end{gather*}
and 
\begin{gather*}
0[2]^{n-2}l\\
202^{n-3}l\\
2202^{n-4}l\\
...\\
[2]^{n-4}0[2]^2l
\end{gather*}
are infected. It is easy to see that the rest of the vertices in $\cup_{j \notin \{0,2\}} ([\ast]^{n-1}j)$ are not infected at time $M_{q}(n-3)+n+1$.

Therefore, the statement is true for $k=1$.

Now assume the statement is true for $k \geq 1$. Consider a vertex $x$ with $x_n=2$ satisfying Condition($k+1$). By the induction hypothesis this vertex $x$ has only one neighbors at time $M_q(n-3)+n+k-1$, namely, $x_1x_2\cdots x_{n-1}0$. 

Indeed, let us choose a particular $i \in [n-1]$ where $i \notin \{n-1,n-2\}$ and $i \notin I$ and w.l.o.g we can assume that $x_i=0$. Consider a neighbor $y$ of $x$ where $y_j=x_j$ for $j \neq i$ and $y_i=2$. By the induction hypothesis, the vertex $y$ is not infected at time $M_q(n-3)+n+k-1$. Consider another neighbor $z$ of $x$ where $z_j=x_j$ for $j \neq i$ and $z_i=a$ where $a \notin \{0,2\}$. By the induction hypothesis, $z$ is not infected at time $M_q(n-3)+n-k-1$ since the vertex $z$ satisfies Condition($k+2$). The analysis is very similar for $i \in \{n-1,n-2\}$ and $i \notin I$.  

Now we choose a particular $i \in [n-1]$ where $i \notin \{n-1,n-2\}$ and $i \in I$. Consider a neighbor $y$ of $x$ where $y_j=x_j$ for $j \neq i$ and $y_i \notin \{x_i,0,2\}$. By the induction hypothesis $y$ is not infected at time $M_q(n-3)+n+k-1$ since the vertex $y$ satisfies Condition($k+1$). Consider another neighbor $z$ of $x$ with $z_j=x_j$ for $j \neq i$ and $z_i=a$ where $a \in \{0,2\}$. By the induction hypothesis the vertex $z$ is not infected at time $\leq M_q(n-3)+n+k-1$ since $z$ satisfies Condition($k$). The analysis is very similar for $i \in \{n-1,n-2\}$ and $i \notin I$.

Now consider a neighbor $y$ of $x$ where $y_j=x_j$ for $j \neq n$ and $y_n \notin \{0,2\}$.By the induction hypothesis, the vertex $y$ is not infected at time $M_q(n-3)+n-k$ since $y$ satisfies Condition($k$). 

Consider a vertex $x$ with $x_n=l$ satisfying Condition($k+1$). By the induction hypothesis this vertex $x$ has only one infected neighbor at time $M_q(n-3)+n+k$, namely, $x_1x_2\cdots x_{n-1}0$. The analysis is very similar to the case where $x_n=2$. 
\end{proof}
It is easy to see that claim \ref{claim:subset I} suffices to show (\ref{eq:T(Qn)}).

\vspace{5mm}
Now let us try to prove the upper bound of $M_{q}(n)$. Let $A$ be a set of initially infected vertices spanning the hypercube $Q_{n,q}$ by the process. 

Let  $Q_0=Q_{i_1}^{x_{i_1}} \subset Q_{i_2}^{x_{i_2}} \subset \cdots \subset Q_{i_t}^{x_{i_t}}=Q_{n,q}$ be a nested sequence of internally spanned subcubes with respect to $A$ and $Q_{m_1}^{Z_{m_1}}$,$Q_{m_2}^{Z_{m_2}}$,$\cdots$,$Q_{m_{t-2}}^{Z_{m_{t-2}}}$,$Q_{m_{t-1}}^{Z_{m_{t-1}}}$ be the cubes that marge with cubes $Q_{i_j}^{x_{i_j}}$ as described in Lemma \ref{lemma:nested sequence}. Assume w.l.o.g both sequences are of maximal lengths. 

W.l.o.g, assume that $A$ is minimal (under containment) set spanning $Q_{n,q}$. Note that $i_{t-1} \leq n$. We will proceed by analyzing different cases based on the the values of $i_{t-1}$ and $i_{t-2}$. By Lemma \ref{lemma:nested sequence}, for all $ 1 \leq j \leq t-1$ we have $i_j \geq m_j$

Case 1. If $i_{t-1} \leq n-2$, then after at most $M_q(i_{t-1}) \leq M_q(n-2)$ steps, both $Q_{i_{t-1}}^{x_{i_{t-1}}}$ and $Q_{m_{t-1}}^{z_{m_{t-1}}}$ are fully infected. Since $\langle Q_{i_{t-1}}^{x_{i_{t-1}}} \cup Q_{m_{t-1}}^{z_{m_{t-1}}} \rangle=Q_{n,q}$, after at most additional $n+1$ steps every vertex in $Q_{n,q}$ will be infected by Lemma \ref{lemma:summary}. Indeed, we have dim($Q_{i_{t-1}}^{x_{i_{t-1}}} \vee Q_{m_{t-1}}^{z_{m_{t-1}}}$)=n and consider the worst case where $d(x,y)=2$, and $|\{i:x_i=y_i=\ast \}|=0$ with $x=Q_{i_{t-1}}^{x_{i_{t-1}}}$ and $y=Q_{m_{t-1}}^{z_{m_{t-1}}}$.
Therefore, we have 
$$T_{Q_{n,q}}(A) \leq M(n-2)+n+1$$

Case 2. If $i_{t-1}=n-1$ and $i_{t-2}=n-2$, then there is a vertex $v \in A\cap Q_{m_{t-2}}^{z_{m_{t-2}}}$ such that $d(x_{t-2},v)=1$ and $\langle Q_{i_{t-2}}^{x_{i_{t-2}}} \cup v \rangle=Q_{i_{t-1}}^{x_{i_{t-1}}}.$ Moreover, there exists a vertex $w \in A \cap Q_{m_{t-1}}^{z_{m_{t-1}}}$ such that $\langle Q_{i_{t-1}}^{x_{i_{t-1}}} \cup w \rangle=Q_{n,q}.$ 

Note that $d(x_{i_{t-2}},w)=1$ or $d(x_{i_{t-2}},w)=2$. If $(x_{i_{t-2}},w)=2$, then $\langle Q_{i_{t-2}}^{x_{i_{t-2}}} \cup w \rangle=Q_{n,q},$ which contradicts the minimality of $A$ since $\langle A \backslash \{v\} \rangle =Q_{n,q}$. Thus $d(x_{i_{t-2}},w)=1$.

W.l.o.g we can assume that
$$x_{i_{t-2}}=[\ast]^{n-2}00,$$
$$x_{i_{t-2}} \vee v=[\ast]^{n-1}0,$$
and 
$$x_{i_{t-2}} \vee w=[\ast]^{n-2}0\ast.$$

It is easy to see that after at most $M_q(n-2)$ steps the cube $Q_{i_{t-2}}^{x_{i_{t-2}}}$ is fully infected. After at most $(n-1)$ additional steps both $Q_{n-1}^{x_{i_{t-2}} \vee v}$ and $Q_{n-1}^{x_{i_{t-2}} \vee w }$ are fully infected by Lemma \ref{lemma:summary}. After 2 more steps every vertex in $Q_{n,q}$ will be infected again by Lemma \ref{lemma:summary}. Therefore, we have
$$T_{Q_{n,q}}(A) \leq M_q(n-2)+n-1+2 =M_q(n-2)+n+1.$$

Case 3. If $i_{t-1}=n-1$, $i_{t-2} \leq n-3$, and $d({x_{i_{t-2}}},z_{m_{t-2}}) \leq 1$, then after $M(n-3)$ steps both $Q_{i_{t-2}}^{x_{i_{t-2}}}$ and $Q_{m_{t-2}}^{z_{m_{t-2}}}$ are fully infected. Since $\langle Q_{i_{t-2}}^{x_{i_{t-2}}} \cup Q_{m_{t-2}}^{z_{m_{t-2}}} \rangle=Q_{i_{t-1}}^{x_{i_{t-1}}}$, after at most $i_{t-1}$ more steps the cube $Q_{i_{t-1}}^{x_{i_{t-1}}}$ is fully infected by Lemma \ref{lemma:summary}. 

Since $i_{t-1}=n-1$, $d(x_{i_{t-1}},Z_{m_{t-1}}) \leq 1$. Again by Lemma \ref{lemma:summary} after at most $n$ more steps every vertex in $Q_{n,q}$ will be infected. Therefore, we have 
$$T_{Q_{n,q}}(A) \leq M_q(n-3)+n-1+n=M_q(n-3)+2n-1.$$

Case 4. If $i_{t-1}=n-1$,$i_{t-2} \leq n-3$, $d(x_{i_{t-2}},z_{m_{t-2}})=2$, and $m_{t-2}=n-3$, then after at most $M(n-3)$ steps both $Q_{i_{t-2}}^{x_{i_{t-2}}}$ and $Q_{m_{t-2}}^{z_{m_{t-2}}}$ are fully infected. Note that $\langle Q_{i_{t-2}}^{x_{i_{t-2}}} \cup Q_{m_{t-2}}^{z_{m_{t-2}}} \rangle=Q_{i_{t-1}}^{x_{i_{t-1}}}$ and since $i_{t-2} \geq m_{t-2}$ then $i_{t-2}=n-3$. After at most 3 additional steps the cube $Q_{i_{t-1}}^{x_{i_{t-1}}}$ is infected. Indeed observe that $|\{i: x_i=y_i=\ast\}| =n-3$ where $x= Q_{i_{t-2}}^{x_{i_{t-2}}}$ and $y= Q_{i_{t-2}}^{x_{i_{t-2}}}$ since one of the coordinates must be the same for $x$ and $y$. since $d(x,y)=2$ and $\text{dim}(x \vee y)=n-1$, $T_{Q_{n,q}}(Q_{i_{t-2}}^{x_{i_{t-2}}} \cup Q_{m_{t-2}}^{z_{m_{t-2}}})=3$ by Lemma \ref{lemma:summary}.

Since $i_{t-1}=n-1, d(x_{i_{t-1}},z_{m_{t-1}}) \leq 1$. By lemma \ref{lemma:summary} after at most $n$ more steps every vertex on $Q_{n,q}$ will be infected. Therefore, we have 
$$T_{Q_{n,q}}(A) \leq M_q(n-3)+3+n.$$

Additionally we would like to show 
$$T_{Q_{n,3}}(A) \leq M_3(n-3)+2+n.$$
W.l.o.g, we can assume that 
$$x_{i_{t-1}}=[\ast]^{n-1}0,$$
$$x_{i_{t-2}}=[\ast]^{n-3}000,$$
and 
$$z_{m_{t-2}}=[\ast]^{n-3}ij0,$$
where $i \neq 0$ and $j \neq 0$.

Again after $M_3(n-3)$ steps, both $Q_{i_{t-2}}^{x_{i_{t-2}}}$ and $Q_{m_{t-2}}^{z_{m_{t-2}}}$ are fully infected. After at most 2 additional steps every vertex in $Q_{i_{t-1}}^{x_{i_{t-1}}}$ satisfying $[\ast]^{n-3}kl0$ for all $k,l\in \{0,1,2\}$ but $k \notin \{0,i\}$ and $l \notin \{0,j\}$. 

There is a vertex $y \in A$ such that $A \in [\ast]^{n-1}l$ where $l \neq 0$. Otherwise the vertices not in the cube $Q_{i_{t-1}}^{x_{i_{t-1}}}$ will not be infected. Let $y=y_1y_2 \cdots y_{n-1}l$.

Therefore, after $M_3(n-3)+2$ steps, the vertices $v$ and $w$ in $Q_{i_{t-1}}^{x_{i_{t-1}}}$ are infected, where 
$$v=y_1y_2\cdots y_{n-3}\;i\;y_{n-1}0$$
and 
$$w=y_1y_2\cdots y_{n-3}\;0\; y_{n-1}0.$$
Assume $y_{n-2} \notin \{0,i\}$. Then after $M_3(n-3)+n$, vertices $v^*$ and $w^*$ in $[\ast]^{n-1}l$ are infected, where 
$$v^*=y_iy_2 \cdots y_{n-3}\;i\;y_{n-1}l$$
and 
$$w^*=y_iy_2 \cdots y_{n-3}\;0\;y_{n-1}l. $$
Therefore, after $M_3(n-3)+3$, $[\ast]^{n-1}0$ and $y_1y_2\cdots y_{n-3}\ast y_{n-1}l$ have been infected. By Lemma \ref{lemma:summary} after at most $n-1$ steps every vertex in $Q_{n,3}$ will be infected. 

Now assume $y_{n-2}=0$. If $y_{n-1} \notin \{0,j\}$, similarly it can be shown that after $M_3(n-3)+n+2$ every vertex in $Q_{n,3}$ will be infected. Thus we can assume $y_{n-1} \in \{0,j\}$. First let $y_{n-1}=0$. Note that after $M_3(n-3)+n-1$ steps, the vertices $u$ and $z$ in $Q_{i_{t-1}}^{x_{i_{t-1}}}$ are infected where 
$$u=y_1y_2 \cdots y_{n-3}k00$$
with $k \notin\{i,0\}$
and 
$$z=y_1y_2 \cdots y_{n-3}i00.$$
Then after one more step, the vertices $u^*$ and $z^*$ in $[\ast]^{n-1}l$ are infected, where 
$$u^*=y_1y_2 \cdots y_{n-3}k0l$$
and 
$$z^*=y_1y_2 \cdots y_{n-3}i0l.$$
Therefore, after $M_3(n-3)+3$ steps, $[\ast]^{n-1}0$ and $y_1y_2\cdots y_{n-3}\ast 0l$ have been infected. By Lemma \ref{lemma:summary} after at most $n-1$ steps every vertex in $Q_{n,3}$ will be infected. 

Due to symmetry for other values of $y_{n-2}$ and $y_{n-1}$ the argument is the same. 

Therefore, we have 
$$T_{Q_{n,3}} \leq M_3(n-3)+n+2.$$

Case 5. If $i_{t-1}=n-1$,$i_{t-2} \leq n-3$, $d(x_{i_{t-2}},z_{m_{t-2}})=2$, and $m_{t-2} < n-3$, after at most $M_q(n-3)$ steps, both $Q_{i_{t-2}}^{x_{i_{t-2}}}$ and $Q_{m_{t-2}}^{z_{m_{t-2}}}$ are fully infected. After at most $n$ more steps the cube $Q_{i_{t-1}}^{x_{i_{t-1}}}$ is fully infected by Lemma \ref{lemma:summary}. Then after at most $n$ more 
steps every vertex on $Q_{n,q}$ is infected again by Lemma ~\ref{lemma:summary} since $d(x_{i_{t-2}},z_{m_{t-2}}) \leq 1$ and $\langle Q_{i_{t-1}}^{x_{i_{t-1}}} \cup Q_{m_{t-1}}^{z_{m_{t-1}}} \rangle=Q_{n,q}.$ Therefore, we have for $q \geq 3$
$$T_{Q_{n,q}}(A) \leq M_q(n-3)+n+n=M_q(n-3)+2n.$$

Additionally we would like to show that 
$$T_{Q_{n,3}}(A) \leq M_3(n-3)+2n-1.$$

In order to show this tighter bound we need to analyze the process after $M_3(n-3)$ steps in a more careful way. Assume that $q=3$. 

W.l.o.g we can assume that 
$$x_{i_{t-1}}=[\ast]^{n-1}0,$$
$$x_{i_{t-2}}=[\ast]^{i_{t-2}}[0]^{n-i_{t-2}},$$
and 
$$z_{m_{t-2}}=[\ast]^p[0]^{i_{t-2}-p}[\ast]^{m_{t-2}-p}ij0,$$
where $i \neq 0$ and $j \neq 0$.

Again after at most $M(n-3)$ steps both $Q_{i_{t-2}}^{x_{i_{t-2}}}$ and $Q_{m_{t-2}}^{z_{m_{t-2}}}$ are fully infected. After at most $n-1$ additional steps, by Lemma \ref{lemma:ST3} and Lemma \ref{lemma:ST6}, vertices $x$ in $Q_{i_{t-1}}^{x_{i_{t-1}}}$ satisfying $ 1 \leq \left \lVert x \right \rVert^{kl0} \leq n-3$  for all $k,l\in \{0,1,2\}$ but $k \notin \{0,i\}$ and $l \notin \{0,j\}$ are infected.

There is a vertex $y \in A$ such that $A \in [\ast]^{n-1}l$ where $l \neq 0$. Otherwise the vertices not in the cube $Q_{i_{t-1}}^{x_{i_{t-1}}}$ will not be infected. Let $y=y_1y_2 \cdots y_{n-1}l$. 

Assume that $\sum_{i=1}^{n-3}\mathbb{I}_{\{y_i \neq 0\}} \geq 1$. Therefore, after $M_3(n-3)+n-1$ steps, the vertices $v$ and $w$ in $Q_{i_{t-1}}^{x_{i_{t-1}}}$ are infected, where 
$$v=y_1y_2\cdots y_{n-3}\;i\;y_{n-1}0$$
and 
$$w=y_1y_2\cdots y_{n-3}\;0\; y_{n-1}0.$$
Assume $y_{n-2} \notin \{0,i\}$. Then after $M_3(n-3)+n$, vertices $v^*$ and $w^*$ in $[\ast]^{n-1}l$ are infected, where 
$$v^*=y_iy_2 \cdots y_{n-3}\;i\;y_{n-1}l$$
and 
$$w^*=y_iy_2 \cdots y_{n-3}\;0\;y_{n-1}l. $$
Therefore, after $M(n-3)+n$, $[\ast]^{n-1}0$ and $y_1y_2\cdots y_{n-3}\ast y_{n-1}l$ have been infected. By Lemma \ref{lemma:summary} after at most $n-1$ steps every vertex in $Q_{n,3}$ will be infected. 

Now assume $y_{n-2}=0$. If $y_{n-1} \notin \{0,j\}$, similarly it can be shown that after $M_3(n-3)+2n-1$ every vertex in $Q_{n,3}$ will be infected. Thus we can assume $y_{n-1} \in \{0,j\}$. First let $y_{n-1}=0$. Note that after $M_3(n-3)+n-1$ steps, the vertices $u$ and $z$ in $Q_{i_{t-1}}^{x_{i_{t-1}}}$ are infected where 
$$u=y_1y_2 \cdots y_{n-3}k00$$
with $k \notin\{i,0\}$
and 
$$z=y_1y_2 \cdots y_{n-3}i00.$$
Then after one more step, the vertices $u^*$ and $z^*$ in $[\ast]^{n-1}l$ are infected, where 
$$u^*=y_1y_2 \cdots y_{n-3}k0l$$
and 
$$z^*=y_1y_2 \cdots y_{n-3}i0l.$$
Therefore, after $M_3(n-3)+n$ steps, $[\ast]^{n-1}0$ and $y_1y_2\cdots y_{n-3}\ast 0l$ have been infected. By Lemma \ref{lemma:summary} after at most $n-1$ steps every vertex in $Q_{n,3}$ will be infected. 

Due to symmetry for other values of $y_{n-2}$ and $y_{n-1}$ the argument is the same. 

Therefore, we have 
$$T_{Q_{n,3}} \leq M_3(n-3)+2n-1.$$

Assume $y=[0]^{n-3}y_{n-2}y_{n-1}l$. Therefore, after $M_3(n-3)+n-1$ steps, as long as $n-1 \geq 2$, the vertices $v$ and $w$ in $Q_{i_{t-1}}^{x_{i_{t-1}}}$ are infected, where 
$$v=[0]^{n-3}\;i\;y_{n-1}0$$
and 
$$w=[0]^{n-3}\;0\; y_{n-1}0.$$
Indeed, the vertex $[0]^n$ and $[0]^{n-3}ij0$ are infected at time $\leq M_3(n-3)$. From now on the proof is identical to the case where $\sum_{i=1}^{n-3}\mathbb{I}_{\{y_i \neq 0\}} \geq 1$.



Finally we have 
$$T_{Q_{n,3}} \leq M_3(n-3)+2n-1.$$

We need one more claim to conclude the proof. 
\begin{claim}
If $M_q(0)=0$,$M_q(1)=1$ and $M_q(2)=3$ and  
for $n \geq 3$ 
$$M_3(n)=\max\{M_3(n-2)+n+1,M_3(n-3)+2n-1\}$$
and for $q \geq 4$
$$M_q=\max\{M_q(n-2)+n+1,M_q(n-3)+2n\},$$ then 
$$M_3(n)=M_3(n-3)+2n-1$$
and for $q \geq 4$
$$M_q(n)=M_q(n-3)+2n.$$
\end{claim}
\begin{proof}
Assume that $q=3$. We have $M_3(3)=M_3(0)+5=5$ or $M_3(3)=M_3(1)+4=5$

$M_3(4)=M_3(1)+7=8$ or 
$M_3(4)=M_3(2)+5=8$.

$M_3(5)=M_3(2)+9=12$ or 
$M_3(5)=M_3(3)+6=11$.

Thus the lemma holds for $ 1 \leq n \leq 3$.

For $n \geq 3$, we assume that the lemma holds for $n,n+1,n+2$. We have 
\begin{align}
    M_3(n-3)& =\max\{M_3(n)+2(n+3)-1,M_3(n+1)+n+4\}\\
    &=\max\{M_3(n-3)+2n-1+2(n+3)-1, M_3(n-2)+2(n+1)-3+n+4\}\\
    &= \max{M_3(n-3)+4n+4,M_3(n-2)+3n+3}\\
   \label{eq:M(n-3+4n+4)} &=M_3(n-3)+4n+4\\ 
    &=M_3(n)-2n+1+4n+4\\
    &=M_3(n)+2n+5
\end{align}
where (\ref{eq:M(n-3+4n+4)}) follows from 
\begin{align*}
    M_3(n-3)+4n+4& =M_3(n-3)-2n+1+2n+5\\
    & \geq M_3(n-2)+n+1+2n+5\\
    & > M_3(n-2)+3n+3.
\end{align*}
For $q \geq 4$, the calculation is almost the same. 
\end{proof}
Therefore, we have our desired results.
\end{proof} 
Now we are ready to prove Theorem \ref{Theorem1}. 

\begin{proof}
Since $M_3(n)=M_3(n-3)+2n-1$, we have $M_3(n)=\frac{n^2}{3}+\frac{2n}{3}+C$ where $C$ is an undetermined real constant. Since $M_3(0)=0$, we have $C=0$ and thus $M_3(n)=\frac{n^2}{3}+\frac{2n}{3}$ for $n=3,6,9,\cdots$.

Other expressions in Theorem \ref{Theorem1} can be derived in the same way. 
\end{proof}
\section{Open problems}
We conclude with several open problems. 
\begin{enumerate}
    \item What is the maximum percolation time on the $n$-dimensional $q$-ary hypercube when the infection threshold $r \geq 3$?
    
\noindent The value $M(Q_{n,2},r \geq 3)=\frac{2^n}{n}(\log n)^{-O(1)}$ was obtained in \cite{Ivailo} by discovering a link between this problem and a generalization of the Snake-in-the-Box problem. Improving upon this result to achieve an asymptotically tight, up to a constant, is an interesting question. Finding an exact expression for $M(Q_{n,q},r \geq 3)$ remains as a challenging open problem. 

\item What is the maximum percolation time, denoted by $M([n]^d,r)$, on the $[n]^d$ grid when the infection threshold $ 2 \leq r \leq 2d$?

\noindent The value $M([n]^2,2)=\frac{3n^2}{8}$ was established in \cite{Benevides}. However, for the other values of $d$ and $r$, the behavior of $M([n]^d,r)$ has not been explored. 
\end{enumerate}

\section*{Acknowledgment}
The author would like to thank Prof.~Alexander Barg for suggesting this problem and providing valuable guidance. Additionally, the author extends thanks to  Micha{\l} Przykucki for addressing questions related to \cite{Michal}. 

\bibliographystyle{plain}
\bibliography{main.bib}
\end{document}